\newtheorem{theorem}{Theorem}     
\numberwithin{theorem}{section}
\newtheorem{lemma}[theorem]{Lemma}     
\newtheorem{corollary}[theorem]{Corollary}     
\newtheorem{proposition}[theorem]{Proposition}     
\theoremstyle{definition}  
\newtheorem{definition}[theorem]{Definition}     
\newtheorem{example}[theorem]{Example}     
\newtheorem{remark}[theorem]{Remark}
\newcounter{FNC}[page]
\def\fauxfootnote#1{{\addtocounter{FNC}{2}$\Magenta{^\fnsymbol{FNC}}$%
     \let\thefootnote\relax\footnotetext{\Magenta{$^\fnsymbol{FNC}$#1}}}}
\renewcommand{\P}{{\mathbb P}}
\newcommand{\R}{{\mathbb R}}
\newcommand{\sz}{\footnotesize}
\newcommand{\sm}{\normalsize}
\newcommand{\calC}{{\mathcal C}}
\newcommand{\calJ}{{\mathcal J}}
\newcommand{\calS}{{\mathcal S}}
\newcommand{\lszero}[3]{C^{#1}_{#2}(#3)}
\newcommand{\lsone}[3]{C^{#1}_{\mbox{\footnotesize{\textup{st}}}(#2)}(#3)}
\newcommand{\frakm}{{\mathfrak m}}
\DeclareMathOperator{\st}{{\rm st}}
\DeclareMathOperator{\codim}{{\rm codim}}
\DeclareMathOperator{\reg}{{\rm reg}}
\DeclareMathOperator{\image}{{\rm image}}
\DeclareMathOperator{\HF}{{\it HF}}
\DeclareMathOperator{\HP}{{\it HP}}
\newcommand{\defcolor}[1]{{\color{blue}#1}}
\newcommand{\demph}[1]{\defcolor{{\it #1}}}
\title{Bivariate Semialgebraic Splines}
\author[M.~DiPasquale]{Michael DiPasquale}     
\address{Michael DiPasquale\\     
         Department of Mathematics\\     
         Colorado State University\\     
         Fort Collins\\
         CO \ 80521\\     
         USA}     
\email{michael.dipasquale@colostate.edu}     
\urladdr{\url{https://midipasq.github.io/}}   
\author[F.~Sottile]{Frank Sottile}     
\address{Frank Sottile\\     
         Department of Mathematics\\     
         Texas A\&M University\\     
         College Station\\     
         Texas \ 77843\\     
         USA}     
\email{sottile@math.tamu.edu}     
\urladdr{\url{http://www.math.tamu.edu/~sottile}}   
\thanks{Research of Sottile supported in part by NSF grant DMS-1501370.}
\subjclass{13D02, 41A15}
\keywords{spline modules, dimension of spline spaces, Hilbert function, Hilbert polynomial}
\begin{document}     
     
\begin{abstract}
 Semialgebraic splines are bivariate splines over meshes whose edges are arcs of algebraic curves.
 They were first considered by Wang, Chui, and Stiller.
 We compute the dimension of the space of semialgebraic splines in two extreme cases.
 If the polynomials defining the edges span a three-dimensional space of polynomials, then we compute the dimensions
 from the dimensions for a corresponding rectilinear mesh.
 If the mesh is sufficiently generic, we give a formula for the dimension of the spline space valid in large degree and
 bound how large the degree must be for the formula to hold.
 We also study the dimension of the spline space in examples which do not satisfy either extreme.
 The results are derived using commutative and homological algebra.
\end{abstract}       

\maketitle     
%
\section{Introduction}\label{Sec:intro}    
A bivariate spline is a function on a domain in $\R^2$ that is piecewise a polynomial with respect
to a \demph{mesh}, that is, a cell decomposition $\Delta$ of the domain. 
A fundamental question is to determine the dimension of the vector space of splines on $\Delta$ with a 
given smoothness and whose polynomial constituents have at most a fixed degree.
Traditionally, $\Delta$ is \demph{rectilinear}, that is, a simplicial~\cite{Strang} or polygonal~\cite{Schum84} complex,
but it is  useful to consider splines where the edges of $\Delta$ are arcs of algebraic curves~\cite{Davydov16,DS17,DY17}. 
Such \demph{semialgebraic splines} were studied by Wang and Chui~\cite{ChuiWang83,Wang75,Wang85}, and 
by Stiller~\cite{Stiller83}.
 
When $\Delta$ is rectilinear, classical spline spaces were recast in terms of graded modules and
homological algebra by Billera~\cite{Billera88}, who  developed this with Rose~\cite{BiRo91,BiRo92}.  
Further foundational work by Schenck and Stillman was given in~\cite{ShSt97a,ShSt97b}.

With Sun, we observed that this homological machinery carries over to semialgebraic meshes~\cite{DiPSS}.
We used this to determined the dimensions of spline spaces for two distinct generalizations of rectilinear meshes when
there is a single interior vertex. 
In one, the forms underlying the edges span a two-dimensional vector space, allowing a direct comparison via tensor
product to the rectilinear case. 
In the other, generic, case, the forms have distinct tangents at the vertex and do not simultaneously vanish at any other
point, which corresponds to rectilinear edges having all slopes at the vertex distinct.

We consider two cases of semialgebraic meshes extending those from~\cite{DiPSS}.
For the first, we assume that every edge form lies in the vector space spanned by three forms of degree $n$
(called a \demph{net}).
This implies that the forms at every interior vertex $v$ span a two-dimensional space,
and that $\Delta$ is mapped to a rectilinear mesh $\phi(\Delta)$ under the rational map given by the net.
When the forms defining the net have no common zeros,
the space of semialgebraic splines is obtained from a spline space on $\phi(\Delta)$ via a tensor product.
We describe this in Section~\ref{S:Net} and illustrate how the Morgan-Scott phenomenon~\cite{MoSc77} occurs
for these spaces.
We determine the dimension of the spline space valid for large degree $d$ and bound how large $d$ must be for this
dimension formula to hold. 

The second case is of splines over generic semialgebraic meshes.
In addition to the conditions from~\cite{DiPSS} for genericity at interior vertices, we also assume a certain acyclicity
condition on $\Delta$ that appeared in~\cite{McSh09}.
These conditions are satisfied for almost all meshes $\Delta$ and thus by generic meshes in the
algebraic geometry sense.
For such generic meshes, we determine the dimension of the spline space valid for large degree $d$ and we modify results from~\cite{DiP_Mixed} to bound how
large $d$ must be for this dimension formula to hold.

In Section~\ref{S:SplineModules}, we fix our notation, give background on spline modules, and give examples.
Section~\ref{S:Net} studies splines whose meshes come from a net, relates them to rectilinear spline spaces, and studies the
Morgan-Scott phenomenon.
Section~\ref{S:generic} studies generic meshes, and in Section~\ref{S:regularity} we give a bound on when our
formula for the dimension of the spline space for a generic mesh holds.
In Section 6, we discuss meshes to which our theorems do not apply, but for which homological methods compute dimensions of
spline spaces.

\section{Modules of semialgebraic splines}\label{S:SplineModules}

Billera~\cite{Billera88} introduced methods from homological algebra into the study of splines.
This was refined by Billera and Rose~\cite{BiRo91,BiRo92} and by Schenck and Stillman~\cite{ShSt97a,ShSt97b}, who
viewed splines as a graded module over the polynomial ring, so that
the dimension of spline spaces is given by the Hilbert function of the module.
In~\cite{DiPSS} we observed that this homological approach remains valid for semialgebraic splines.
We recall that.
For more background, we recommend \S~8.3 of~\cite{CLO05} or~\cite{Eisenbud}.

A \demph{mesh} $\Delta$ is a finite cell complex in $\R^2$ whose 1-cells are arcs of irreducible real
algebraic curves. 
The 2-cells of $\Delta$ are \demph{faces}, the 1-cells \demph{edges}, and the 0-cells are \demph{vertices}. 
We assume that each vertex and edge of $\Delta$ lies in the boundary of some face (it is \demph{pure}), that it is
connected, and that it is \demph{hereditary}: for any faces $\sigma,\sigma'$ sharing a vertex $v$,
there is a sequence $\sigma=\sigma_0,\sigma_1,\dotsc,\sigma_n=\sigma'$ of faces containing $v$ such that   
each pair $\sigma_{i-1},\sigma_i$ for $i=1,\dotsc,n$ shares an edge.
Write $\defcolor{|\Delta|}\subset\R^2$ for the support of $\Delta$.
We assume that $|\Delta|$ is contractible and require that each connected component of the intersection of 
two cells of $\Delta$ is a cell of $\Delta$.
Write $\Delta^\circ_i$ for the set of $i$-cells of $\Delta$ that lie in the interior of $|\Delta|$.
Every face $\sigma$ of $\Delta$ inherits the orientation of $\R^2$ and we fix an orientation of
each edge $\tau\in\Delta^\circ_1$.

Figure~\ref{F:cell_complex} shows a mesh $\Delta$ with $|\Delta|=[-\sqrt{2},\sqrt{2}]\times[-1,1]$.
\begin{figure}[htb]

  \includegraphics{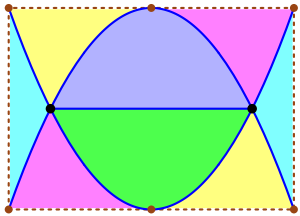}

 \caption{Mesh with two interior vertices.}
 \label{F:cell_complex}
\end{figure}
It has two interior vertices,  $(-1,0)$ and $(1,0)$, nine interior edges, and eight faces, 
and the curves underlying the edges are the parabolas
$y=x^2-1$, $y=1-x^2$ and the $x$-axis.

Let $S$ be a ring.
A \demph{chain complex $\calC$} is a sequence $C_0,C_1,\dotsc,C_n$ of $S$-modules with $S$-module maps
$\partial_i\colon C_i\to C_{i-1}$, whose compositions vanish, $\partial_{i-1}\circ\partial_i=0$,
so that the kernel of $\partial_{i-1}$ contains the image of $\partial_i$.
(Here, $C_{-1}=C_{n+1}=0$.)
The \demph{homology} of $\calC$ is the sequence of $S$-modules 
$\defcolor{H_i(\calC)}:=\ker(\partial_{i-1})/\image(\partial_i)$, for
$i=0,\dotsc,n$. 

Let \defcolor{$\calS[\Delta]$} be the chain complex whose $i$th module has a basis given by the cells of
$\Delta^\circ_i$ and whose maps are induced by the boundary maps on the cells.
For the mesh $\Delta$ of Figure~\ref{F:cell_complex}, $\calS[\Delta]$ is $S^8\to S^9\to S^2$.
Since the interior cells subdivide $|\Delta|$ with its boundary \defcolor{$\partial|\Delta|$} removed, the homology of the
complex $\calS[\Delta]$ is the relative  homology \defcolor{$H_i(|\Delta|,\partial|\Delta|;S)$}.
This vanishes when $i=0$.
As $|\Delta|$ is connected and contractible, we also have that $H_1(\calS[\Delta])=0$ and $H_2(\calS[\Delta])=S$.  

For integers $r,d\geq 0$, let $\defcolor{\widetilde{C}^r_d(\Delta)}$ be the real vector space of functions $f$
on $|\Delta|$ which have continuous $r$th order partial derivatives and whose
restriction to each face $\sigma$ of $\Delta$ is a polynomial $f_\sigma$ of degree at most $d$.
By~\cite{Wang75} (see also~\cite[Cor.~1.3]{BiRo92}), elements $f\in \widetilde{C}^r_d(\Delta)$ are lists
$(f_\sigma\mid\sigma\in\Delta_2)$ of polynomials such that if $\tau\in\Delta^\circ_1$ is an interior edge with
defining equation $g_\tau(x,y)=0$ that borders the faces $\sigma,\sigma'$, then   
$g_\tau^{r+1}$ divides the difference $f_\sigma-f_{\sigma'}$.
(The quotient is the smoothing cofactor at $\tau$.)

Figure~\ref{F:spline_graphs} displays the graphs of two splines on the mesh $\Delta$ of 
\begin{figure}[htb]

\raisebox{10pt}{\includegraphics[height=160pt]{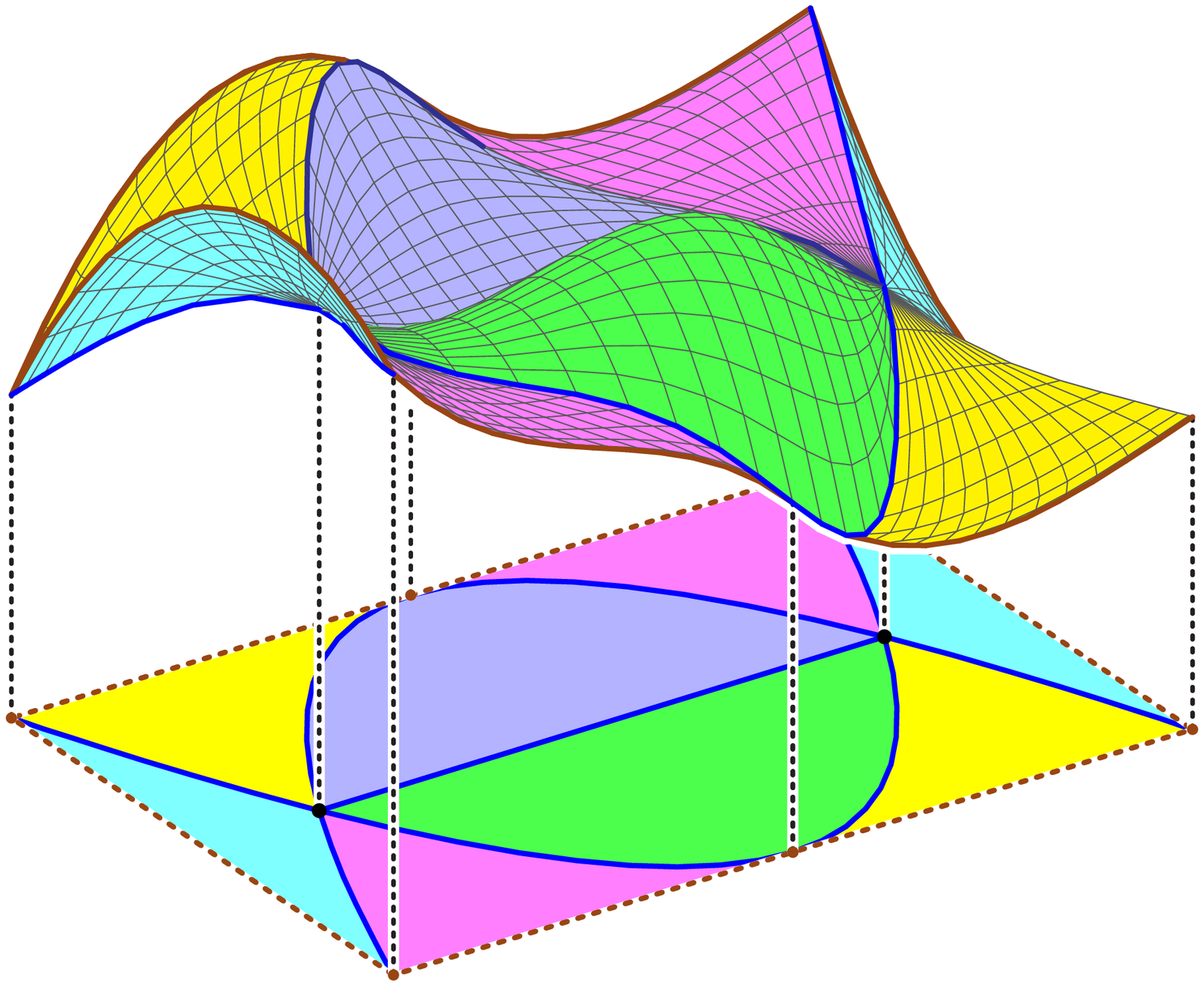}}
\qquad
\includegraphics[height=180pt]{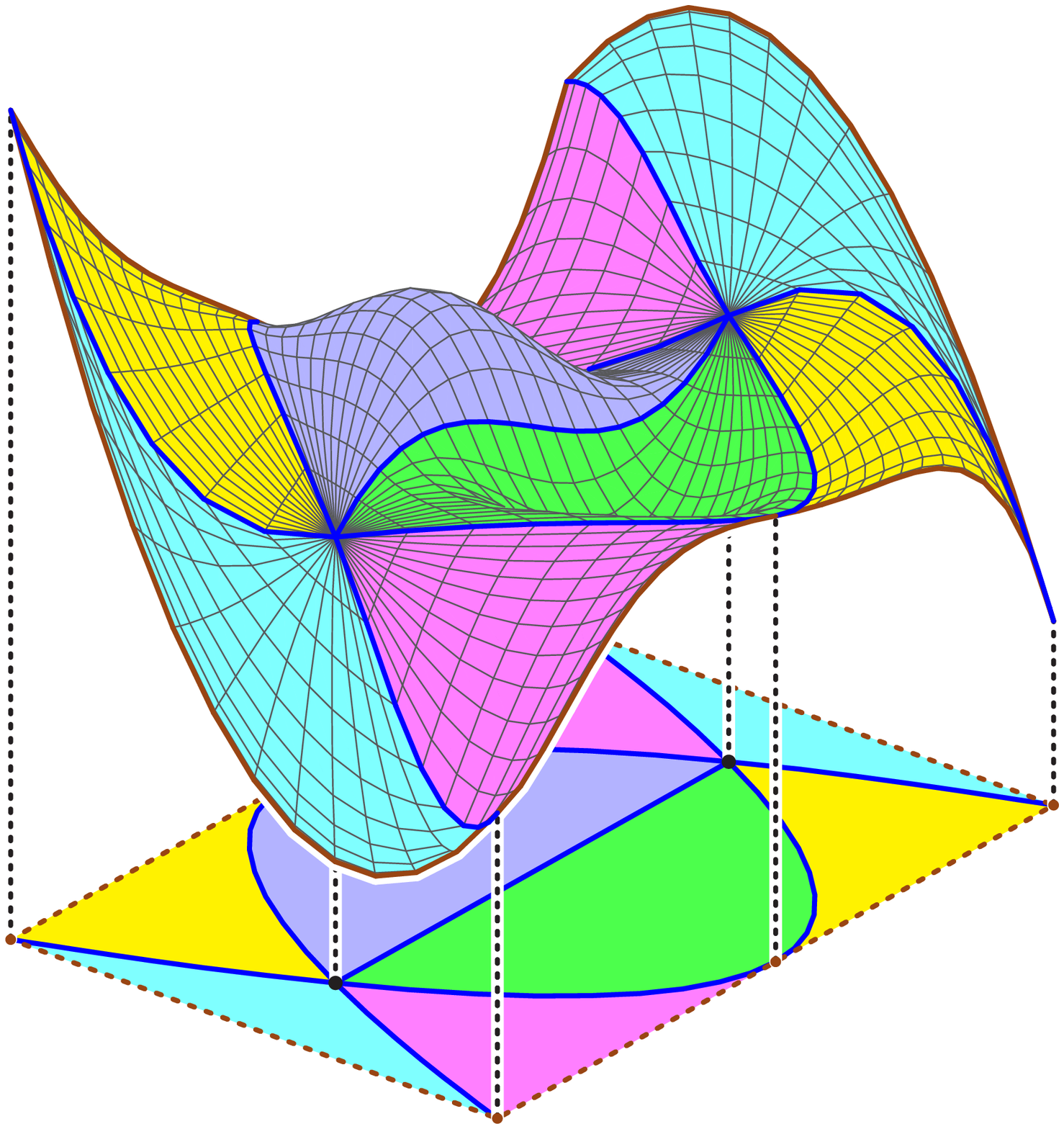}

\caption{Graphs of splines.}
\label{F:spline_graphs}
\end{figure}
Figure~\ref{F:cell_complex}.
The spline on the left lies in $\widetilde{C}^0_3(\Delta)$ and that on the right lies in
$\widetilde{C}^1_5(\Delta)$.

Billera and Rose~\cite{BiRo91} observed that homogenizing spline spaces enables a global homological
approach to compute them. 
Let $\defcolor{S}:=\R[x,y,z]$ be the homogeneous coordinate ring of $\P^2(\R)$.
Homogeneous polynomials are called \demph{forms}.
Write $\langle G_1,\dotsc,G_t\rangle$ for the ideal of $S$ generated by forms
$G_1,\dotsc,G_t\in S$.
Let $\defcolor{C^r_d(\Delta)}$ be the vector space of lists $(F_\sigma \mid \sigma\in\Delta_2)$ of
forms in $S$ of degree $d$ such that if $f_\sigma:= F_\sigma(x,y,1)$ is the dehomogenization of
$F_\sigma$, then $(f_\sigma \mid \sigma\in\Delta_2)\in \widetilde{C}^r_d(\Delta)$.  
Likewise let $G_\tau$ be the homogenization of the polynomial $g_\tau$ defining the curve underlying $\tau$.
We call $G_\tau$ the \demph{edge form} of $\tau$.

The \demph{spline module} $\defcolor{C^r(\Delta)}:=\bigoplus_d C^r_d(\Delta)$ is the direct sum of these homogenized spline
spaces. 
It is a graded module of the graded ring $S$.

\begin{lemma}\label{L:SplineModule}
 The spline module $C^r(\Delta)$ is finitely generated.
 It is the kernel of the map
 \begin{equation}\label{Eq:spline_kernel}
   S^{\Delta_2}\ \simeq\ \bigoplus_{\sigma\in\Delta_2} S\ \xrightarrow{\ \ \partial\ \ }
   \bigoplus_{\tau\in\Delta^\circ_1} S/\langle G_\tau^{r+1}\rangle\ ,
 \end{equation}
 where if $F=(F_\sigma \mid \sigma\in\Delta_2)\in S^{\Delta_2}$ 
 and $\tau\in\Delta^\circ_1$, then the $\tau$-component of $\partial F$ is the
 difference $F_\sigma-F_{\sigma'}$, where $\tau$ is a component of the intersection $\sigma\cap\sigma'$
 and its orientation agrees with that induced from $\sigma$,
 but is opposite to that induced from $\sigma'$.
\end{lemma}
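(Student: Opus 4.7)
The proof has two parts: identifying $C^r(\Delta)$ with $\ker(\partial)$, and then deducing finite generation. The first part is essentially a translation of the Wang smoothing-cofactor criterion (recalled just before the statement) into the module-theoretic language of~\eqref{Eq:spline_kernel}, while the second is a formal consequence of Noetherianity.

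First I would check that the map $\partial$ is well defined. By purity and the hereditary condition, each interior edge $\tau\in\Delta^\circ_1$ lies in the boundary of exactly two faces $\sigma,\sigma'$ (if $\sigma\cap\sigma'$ has several one-dimensional components, the rule below is applied to each component separately). The fixed orientation of $\tau$ singles out which of the two faces induces it positively; calling that face $\sigma$, declare the $\tau$-component of $\partial F$ to be the class of $F_\sigma-F_{\sigma'}$ in $S/\langle G_\tau^{r+1}\rangle$. Because each $G_\tau^{r+1}$ is homogeneous, $\partial$ is a graded $S$-module homomorphism from $S^{\Delta_2}$ (with basis indexed by $\Delta_2$, so $S$ in degree $0$) to $\bigoplus_\tau S/\langle G_\tau^{r+1}\rangle$.

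Next I would identify $\ker(\partial)$ with $C^r(\Delta)$. By construction, a tuple $F=(F_\sigma)$ of degree-$d$ forms satisfies $\partial F=0$ if and only if $G_\tau^{r+1}$ divides $F_\sigma-F_{\sigma'}$ in $S$ for every $\tau\in\Delta^\circ_1$. Setting $z=1$ converts this homogeneous divisibility into the affine divisibility $g_\tau^{r+1}\mid f_\sigma-f_{\sigma'}$; conversely, since $G_\tau$ is the homogenization of the irreducible polynomial $g_\tau$ and $F_\sigma-F_{\sigma'}$ is homogeneous of degree $d$, no factors of $z$ are lost or gained and the affine divisibility lifts back uniquely. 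By Wang's criterion, these divisibilities are together equivalent to $(f_\sigma)\in\widetilde{C}^r_d(\Delta)$, which is the defining condition of $C^r_d(\Delta)$. Summing over $d\ge 0$ gives $C^r(\Delta)=\ker(\partial)$.

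Finite generation is then automatic: $S=\R[x,y,z]$ is Noetherian, $S^{\Delta_2}$ is a finitely generated free graded $S$-module, and $\ker(\partial)$ is a graded $S$-submodule, hence finitely generated. The only genuine subtlety, and the place where I expect to need the most care, is the passage between the homogeneous and affine divisibility statements in the second step: one must verify that homogenization and dehomogenization are mutually inverse on the specific homogeneous forms occurring here, so that Wang's affine statement really does transport to the statement about $G_\tau^{r+1}\mid F_\sigma-F_{\sigma'}$ in $S$. Everything else reduces to unwinding definitions and tracking the signs dictated by the orientations.
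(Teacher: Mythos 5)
Your proposal is correct and follows the same route the paper takes (the paper states the lemma without a written proof, relying on the smoothing-cofactor criterion of Wang recalled just beforehand, the homogenization observation of Billera--Rose, and Noetherianity of $S$ for finite generation). Your extra care about the homogenization step is sound: since $G_\tau$ is the minimal homogenization of $g_\tau$ and homogenization is multiplicative, $g_\tau^{r+1}\mid f_\sigma-f_{\sigma'}$ does lift to $G_\tau^{r+1}\mid F_\sigma-F_{\sigma'}$ for the degree-$d$ forms in question.
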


\begin{remark}
 We perform all computations in this paper by implementing the map~\eqref{Eq:spline_kernel} in the computer algebra system
 Macaulay2~\cite{M2}.
 They are available from the website accompanying this
 article\footnote{{\tt https://www.math.tamu.edu/\~{}sottile/research/stories/bivariate/index.html}}.
\end{remark}

Let $M=\bigoplus_d M_d$ be a finitely generated graded $S$-module.
The \demph{Hilbert function} of $M$ records the dimensions of its graded pieces, 
$\defcolor{\HF(M,d)}:=\dim_\R M_d$.
There is an integer $d_0\geq 0$ such that if $d>d_0$, then the Hilbert function is a polynomial, called the
\demph{Hilbert polynomial} of $M$, \defcolor{$\HP(M,d)$}~\cite{Eisenbud}.
The \demph{postulation number} of $M$ is the minimal such $d_0$, the greatest integer at which the Hilbert
function and Hilbert polynomial disagree. 
The reason for these definitions is that the problem of computing the dimensions $\dim C^r_d(\Delta)$ of the
spline spaces is equivalent to computing the Hilbert function of the spline module $C^r(\Delta)$, which equals its
Hilbert polynomial for $d>d_0$.

Table~\ref{T:HilbertFunction} gives the Hilbert function and Hilbert polynomial of $C^r(\Delta)$ for
$r=0,\dotsc,3$, where $\Delta$ is the mesh of Figure~\ref{F:cell_complex}.
The Hilbert polynomials will be explained in Section~\ref{S:ConcludingRemarks}.
\begin{table}[htb]
 \caption{Hilbert function and polynomial of $C^r(\Delta)$.}
 \label{T:HilbertFunction}
  \begin{tabular}{|r|rrrrrrrrrrrrrr|c|r|}
    \hline  
    ${r\backslash d}$\rule{0pt}{12pt}
       &0&1&2&3&4&  5&  6&  7&  8&   9&  10&  11&  12&  13&Polynomial\\\hline
    $0$&1&\underline{3}&\defcolor{{\bf 11}} & 26& 49 & 80& 119& 166& 221& 284& 355& 434& 521& 616&\raisebox{-6pt}{\rule{0pt}{18pt}}%
                         $4d^2{-}5d{+}5$\\
    $1$&1&3&6&10&\defcolor{{\bf 19}}&\underline{34}&57& 87& 125&  171& 225& 287& 357& 435&\raisebox{-6pt}{\rule{0pt}{18pt}}%
                         $4d^2{-}22d{+}45$\\
    $2$&1&3&6&10&15& 21& \defcolor{{\bf 32}}& 48& 71&\underline{102}&  140&  185& 238& 299&\raisebox{-6pt}{\rule{0pt}{18pt}}%
                         $4d^2{-}39d{+}130$\\
    $3$&1&3&6&10&15& 21& 28& 36& \defcolor{{\bf 49}}&  67&  90&  120& 159&\underline{205}&\raisebox{-6pt}{\rule{0pt}{18pt}}%
                         $4d^2{-}56d{+}258$ \\\hline
   &1&3&6&10&15&21&28&36&45&55&66&78&91&105&\raisebox{-6pt}{\rule{0pt}{18pt}}%
     $\frac{1}{2}d^2+\frac{3}{2}d+1$\\\hline
 \end{tabular}
\end{table}
Its last row is the Hilbert function/polynomial of $S$; these are 
splines that are restrictions of polynomials on $\R^2$.
The values at the postulation numbers are underlined, they are at degrees 1, 5, 9, and 13.
The first space containing a spline that is not the restriction of a polynomial is \defcolor{{\bf highlighted}}.

For $\tau\in\Delta^\circ_1$, let $\defcolor{J(\tau)}:=\langle G_\tau^{r+1}\rangle$ be the principal ideal
generated by $G_\tau^{r+1}$ and for $v\in\Delta^\circ_0$, let \defcolor{$J(v)$} be the
ideal generated by all $J(\tau)$ where $\tau$ is incident on $v$.
Let $\calJ_1$ and $\calJ_0$ be the direct sums of these ideals, 
\[
     \calJ_1\ :=\ \bigoplus_{\tau\in\Delta^\circ_1} J(\tau)
     \qquad \mbox{and}\qquad    
     \calJ_0\ :=\ \bigoplus_{v\in\Delta^\circ_0} J(v)\,.
\]
Then $\calJ \colon\calJ_1\xrightarrow{\partial_1}\calJ_0$ is a complex of $S$-modules, with $\partial_1$
the obvious map.
This is a subcomplex of the chain complex $\defcolor{\calS[\Delta]}$ that computes the relative homology
$H_*(|\Delta|,\partial|\Delta|;S)$.
We have the short exact sequence of complexes of $S$-modules,
 \begin{equation}\label{Eq:SES}
   0\ \longrightarrow\ \calJ\ \longrightarrow\ 
   \calS\ \longrightarrow\ \calS/\calJ\ \longrightarrow\  0\,,
 \end{equation}
where the quotient $\calS/\calJ$ is Billera-Schenck-Stillman complex,
 \[
  0\ \longrightarrow\ 
  \bigoplus_{\sigma\in\Delta_2} S\ \xrightarrow{\ \partial_2\ }\ 
  \bigoplus_{\tau\in\Delta^\circ_1} S/J(\tau)\ \xrightarrow{\ \partial_1\ }\ 
  \bigoplus_{v\in\Delta^\circ_0} S/J(v)\ \longrightarrow\ 0\,.
 \]
\begin{remark}\label{rem:CellComplexNotation}
When it is necessary to indicate the underlying mesh $\Delta$ for $\calJ,\calS,$ and $\calS/\calJ$,
we will write $\defcolor{\calJ[\Delta]},\defcolor{\calS[\Delta]}$, and $\defcolor{\calS/\calJ[\Delta]}$.\hfill$\diamond$
\end{remark}
Observe that $C^r(\Delta)$ is the kernel of $\partial_2$.
That is, $C^r(\Delta)=H_2(\calS/\calJ)$.
The short exact sequence~\eqref{Eq:SES} gives the long exact sequence in homology
(note that $H_2(\calJ)=0$).
 \begin{multline*}
   \qquad \;0\to H_2(\calS)\to H_2(\calS/\calJ)\to H_1(\calJ)\to 
    H_1(\calS) \\
   \to H_1(\calS/\calJ)\to H_0(\calJ)\to H_0(\calS)\to H_0(\calS/\calJ)\to 0\,.\ \qquad
 \end{multline*}
 Since $H_{*}(\calS)$ is the homology of $\Delta$ relative to its boundary and $|\Delta|$ is contractible, we have
 that $H_1(\calS)=H_0(\calS)=H_0(\calS/\calJ)=0$.
 This gives the following.
 
\begin{proposition}\label{P:exact_sequences}
 $H_1(\calS/\calJ)\simeq H_0(\calJ)$ and $C^r(\Delta)\simeq S\oplus H_1(\calJ)$, with the factor of $S$ corresponding to
 the  splines that are restrictions of polynomials. 
\end{proposition}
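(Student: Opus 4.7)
The plan is to read off both claims from the long exact sequence in homology that the text has already displayed, once the vanishing terms are correctly identified, and then to check that the resulting short exact sequence splits.

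First I would record which homology groups vanish. By construction $\calJ$ is concentrated in degrees $0$ and $1$, so $H_2(\calJ)=0$. The complex $\calS$ computes $H_*(|\Delta|,\partial|\Delta|;S)$, and the hypotheses that $|\Delta|$ is contractible (so the pair $(|\Delta|,\partial|\Delta|)$ has the same relative homology as a disk relative to its boundary) give $H_2(\calS)\simeq S$, $H_1(\calS)=0$, and $H_0(\calS)=0$; the last of these implies $H_0(\calS/\calJ)=0$ by the surjectivity of $\calS_0\to(\calS/\calJ)_0$. Plugging these into the displayed long exact sequence collapses the tail to $H_1(\calS/\calJ)\xrightarrow{\sim}H_0(\calJ)$, which is the first assertion, and collapses the head to the short exact sequence
\[
 0\ \longrightarrow\ S\ \longrightarrow\ H_2(\calS/\calJ)\ \longrightarrow\ H_1(\calJ)\ \longrightarrow\ 0.
\]
Since $C^r(\Delta)=\ker(\partial_2)=H_2(\calS/\calJ)$ by the remark immediately preceding the proposition, this identifies $C^r(\Delta)$ as an extension of $H_1(\calJ)$ by $S$.

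It remains to show that this extension splits as graded $S$-modules, which will give $C^r(\Delta)\simeq S\oplus H_1(\calJ)$. The map $S\hookrightarrow C^r(\Delta)$ produced by the connecting map sends a form $F\in S$ to the ``diagonal'' spline $(F_\sigma=F\mid\sigma\in\Delta_2)$ obtained by restricting the polynomial $F$ to every face, exactly as described in the statement of the proposition. I would construct an explicit retraction by fixing any face $\sigma_0\in\Delta_2$ and defining the graded $S$-module map $\rho\colon C^r(\Delta)\to S$ by $\rho(F_\sigma\mid\sigma\in\Delta_2):=F_{\sigma_0}$. The composition $\rho$ with the diagonal inclusion is the identity on $S$, so the sequence splits and the direct sum decomposition follows, with the $S$-summand consisting precisely of the diagonal (polynomial) splines.

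I do not expect a serious obstacle here: the entire statement is formal once the relative homology computation of $H_*(\calS)$ and the concentration of $\calJ$ in two degrees are in hand, and both facts have been established earlier in the section. The only point that requires a small additional argument beyond quoting the long exact sequence is producing the splitting map $\rho$, and the evaluation-at-$\sigma_0$ trick above handles that in one line.
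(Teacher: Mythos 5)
Your proposal is correct and follows essentially the same route as the paper: both read the two claims off the long exact sequence induced by the short exact sequence $0\to\calJ\to\calS\to\calS/\calJ\to 0$, using $H_2(\calJ)=0$ and the vanishing of $H_1(\calS)$, $H_0(\calS)$, and $H_0(\calS/\calJ)$ together with $H_2(\calS)\simeq S$. Your explicit splitting $\rho(F_\sigma\mid\sigma\in\Delta_2)=F_{\sigma_0}$ is a valid (and welcome) justification of the direct-sum decomposition that the paper asserts without spelling out.
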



Write \defcolor{$\phi_2$} for the number of faces of $\Delta$, \defcolor{$\phi_1$} for the number of interior edges,
and \defcolor{$\phi_0$} for the number of interior vertices, and for an interior edge $\tau\in\Delta^\circ_1$, let
\defcolor{$n_\tau$} be the degree of the edge form $G_\tau$ of $\tau$.

\begin{corollary}\label{C:DimFormula}
 Suppose that the support $|\Delta|$ of $\Delta$ is contractible.
 Then for $r$ and $d$,
 \begin{equation}\label{Eq:dimFormula}
   \dim C^r_d(\Delta)\ =\ 
    (\phi_2-\phi_1)\tbinom{d+2}{2} + \sum_{\tau\in\Delta^\circ_1} \tbinom{d-(r+1)n_\tau+2}{2}
     + \sum_{v\in\Delta^\circ_0}\dim(S/J(v))_d
     + \dim H_0(\calJ)_d\,.
 \end{equation}
 For $d\gg 0$, $\dim(S/J(v))_d$ is the degree of the scheme defined by $J(v)$. 
\end{corollary}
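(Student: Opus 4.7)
My plan is to take the Euler characteristic of the graded pieces of the Billera--Schenck--Stillman complex $\calS/\calJ$ and identify each term. The key input is Proposition~\ref{P:exact_sequences}, which (together with the identification $C^r(\Delta)=\ker\partial_2=H_2(\calS/\calJ)$) pins down the homology of $\calS/\calJ$: in positions $2$, $1$, $0$ it is $C^r(\Delta)$, $H_0(\calJ)$, and $0$ respectively.

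In each degree $d$, the alternating sum of the dimensions of the modules in $\calS/\calJ$ equals the alternating sum of the dimensions of its homology, giving
\begin{equation*}
\phi_2\dim S_d \;-\;\sum_{\tau\in\Delta^\circ_1}\dim(S/J(\tau))_d\;+\;\sum_{v\in\Delta^\circ_0}\dim(S/J(v))_d\;=\;\dim C^r_d(\Delta)\;-\;\dim H_0(\calJ)_d\,.
\end{equation*}
Now $\dim S_d=\tbinom{d+2}{2}$, and since $J(\tau)=\langle G_\tau^{r+1}\rangle$ is principal with a generator of degree $(r+1)n_\tau$, multiplication by $G_\tau^{r+1}$ is a graded isomorphism $S(-(r+1)n_\tau)\to J(\tau)$. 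Thus $\dim(S/J(\tau))_d=\tbinom{d+2}{2}-\tbinom{d-(r+1)n_\tau+2}{2}$, with the convention that $\tbinom{k}{2}=0$ for $k<2$. Substituting and solving for $\dim C^r_d(\Delta)$ collects the $\tbinom{d+2}{2}$ contributions into $(\phi_2-\phi_1)\tbinom{d+2}{2}$ and yields~\eqref{Eq:dimFormula}.

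For the last sentence, $S/J(v)$ is a finitely generated graded $S$-module, so its Hilbert function agrees with its Hilbert polynomial $\HP(S/J(v),d)$ for $d\gg 0$. Provided at least two of the edge forms meeting at $v$ have no common factor (the standing situation for the meshes considered here), $J(v)$ has height $2$, the projective scheme $V(J(v))\subset\P^2$ is zero-dimensional, and then $\HP(S/J(v),d)$ is the constant equal to the degree of that scheme. The whole argument is routine once Proposition~\ref{P:exact_sequences} is available; the only minor subtlety is interpreting $\tbinom{d-(r+1)n_\tau+2}{2}$ as $0$ when $d<(r+1)n_\tau$, and the genuine work — understanding $\dim(S/J(v))_d$ and $\dim H_0(\calJ)_d$ in concrete settings — is deferred to the later sections.
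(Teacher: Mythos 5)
Your proof is correct and is exactly the argument the paper intends: the corollary is stated as an immediate consequence of Proposition~\ref{P:exact_sequences}, obtained by taking the graded Euler characteristic of the Billera--Schenck--Stillman complex $\calS/\calJ$, using $H_2=C^r(\Delta)$, $H_1\simeq H_0(\calJ)$, $H_0=0$, and the identification $J(\tau)\simeq S(-(r+1)n_\tau)$. Your side remarks (the vanishing convention for the binomial when $d<(r+1)n_\tau$, and the need for $J(v)$ to be height $2$ for the final sentence about the degree of the zero-dimensional scheme $V(J(v))$) are the right caveats and consistent with how the paper uses the formula.
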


\begin{remark}\label{rem:Multiplicity}
If $I\subset S$ is a homogeneous ideal so that $\HP(S/I,d)=\mu$ for some $\mu\in\mathbb{Z}_{>0}$ (equivalently $\dim
(S/I)_d=\mu$ for $d\gg 0$), then $\mu$ is called the \demph{multiplicity} of $I$. \hfill$\diamond$
\end{remark}


%
\section{Nets}\label{S:Net}

In a rectilinear mesh, the edges lie along lines whose equations lie in the
three-dimension\-al space of degree one polynomials.
We consider semialgebraic meshes whose edge forms are similarly constrained to lie in a three-dimen\-sion\-al space of
homogeneous forms, which we call a \demph{net}.
In such a mesh, there is a unique edge between two general points.
Also, the edges incident to a vertex form a pencil, which is a case treated in~\cite{DiPSS}.

\begin{example}\label{Ex:MS}
 The three homogeneous quadratic polynomials $\{x^2 - yz , y^2 - xz , z^2 + xy \}$ span a net.
 Let $(1,1)$, $(2,1)$, $(1,2)$, $(\frac{3}{11},2)$, $(2,\frac{3}{11})$, and $(\frac{5}{2},\frac{5}{2})$ be six points.
 Each pair lies on a unique curve from this net.
 Figure~\ref{F:Qnet} shows a mesh with these vertices.
 \begin{figure}[htb]
 \centering   
 \begin{picture}(426,160)(-16,-5)
  \put(0,-5){\includegraphics{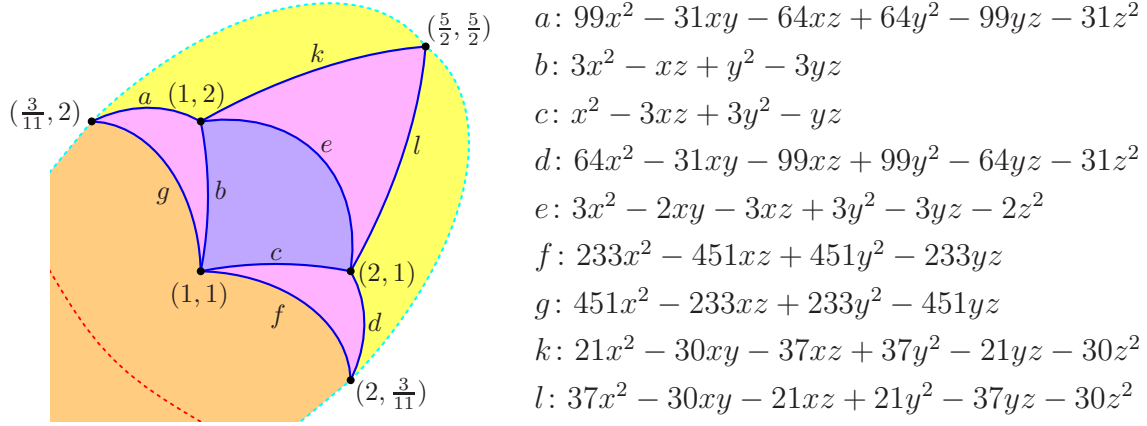}}
              \put(142,141){\sz$(\frac{5}{2},\frac{5}{2})$}     
   \put(-16,109){\sz$(\frac{3}{11}, 2)$}   \put(44,116){\sz$(1, 2)$}  
   \put( 45,40){\sz$(1, 1)$}   \put(116,48){\sz$(2,1)$}  
       \put(115,3){\sz$(2, \frac{3}{11})$}
   \put(98,131){\sz$k$}   \put(137,96){\sz$l$}
   \put(33,115){\sz$a$}    \put(102,97){\sz$e$}
   \put(40,79){\sz$g$}    \put(62,79){\sz$b$} \put(83,56){\sz$c$}
   \put(83,32){\sz$f$}    \put(120,29){\sz$d$}
   \put(183,144){\sm$a\colon 99x^2-31xy-64xz+64y^2-99yz-31z^2$}
   \put(183,126){\sm$b\colon 3x^2-xz+y^2-3yz$}
   \put(183,108){\sm$c\colon x^2-3xz+3y^2-yz$}
   \put(183, 90){\sm$d\colon 64x^2-31xy-99xz+99y^2-64yz-31z^2$}
   \put(183, 72){\sm$e\colon 3x^2-2xy-3xz+3y^2-3yz-2z^2$}
   \put(183,54){\sm$f\colon 233x^2-451xz+451y^2-233yz$}
   \put(183,36){\sm$g\colon 451x^2-233xz+233y^2-451yz$}
   \put(183,18){\sm$k\colon 21x^2-30xy-37xz+37y^2-21yz-30z^2$}
   \put(183,0){\sm$l\colon 37x^2-30xy-21xz+21y^2-37yz-30z^2$}
 \end{picture}
 \caption{A mesh from a quadratic net.}
 \label{F:Qnet}
\end{figure} 
It has nine interior edges labeled $a,\dotsc,l$ and three boundary edges, all from the net.
Only a portion of the lower left face is shown.
We also list the forms underlying the interior edges.\hfill$\diamond$
\end{example}

Let $f,g,h\in S$ be  forms of the same degree and assume that their linear span \defcolor{$N$}
is three-dimensional.
The net $N$ defines a rational map $\defcolor{\phi_N}\colon \P^2\, -\to \P^2$ which is given by
$\phi_N[x:y:z]=[f(x,y,z):g(x,y,z):h(x,y,z)]$ at points $[x:y:z]$ that are not common zeroes of $f,g,h$.
Write $\defcolor{R}:=\R[u,v,w]$ for the homogeneous coordinate ring of the codomain of $\phi_N$.
The pullback map $\defcolor{\phi^*_N}\colon R\to S$ is defined by $\phi^*_N(u)=f$, $\phi^*_N(v)=g,$ and $\phi^*_N(w)=h$.
When the net $N$ is clear we write $\phi$ and $\phi^*$ for $\phi_N$ and $\phi^*_N$.

Suppose that a mesh $\Delta$ has edge forms from the net spanned by $f,g,h$.
For an interior edge $\tau$ of $\Delta$, $\phi(\tau)$ is a line segment defined by the linear form
\defcolor{$\ell_{\phi(\tau)}$} such that $\phi^*(\ell_{\phi(\tau)})=G_\tau$.
General rational maps from $\P^2$ to $\P^2$ may be quite complicated.
We will only consider rational maps which have no \demph{basepoints}.
That is, the polynomials $f$, $g$, and $h$ have no common zeros in $\P^2$, which implies that they form a
\demph{regular sequence} in $S$.

Even when $\phi$ has no basepoints, $\phi(\Delta)$ may not be a traditional rectilinear mesh, since the rational map
may not be injective on $\Delta$.
For example, the map $\phi$ from the net of Example~\ref{Ex:MS} is not injective on lower left face of the mesh
$\Delta$ in Figure~\ref{F:Qnet}---the Jacobian of $\phi$ vanishes on the cubic $x^3+y^3+5xy-1$, an arc of which is shown.
The map $\phi$ `folds'  the plane along this cubic.
Figure~\ref{F:RI} shows the the image $\phi(\Delta)$, using 
the generators $f$, $g$, and $h=100z^2+100xy$ of the net from Example~\ref{Ex:MS}.
The red curve is the image of the cubic of Figure~\ref{F:Qnet}.
It bounds the image of the lower left face of $\Delta$.
\begin{figure}[htb]
 \centering
 \begin{picture}(120,120) 
   \put(0,0){\includegraphics{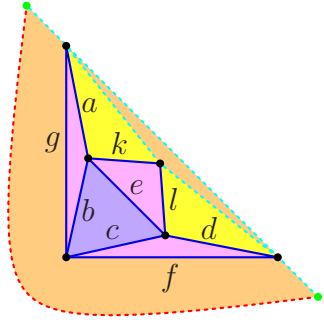}}
   \put(29,78){\sm$a$}     \put(40,62){\sm$k$}
   \put(15,65){\sm$g$}   \put(29,37){\sm$b$}   \put(47,47){\sm$e$} \put(62,41){\sm$l$}
   \put(38,30){\sm$c$}   \put(74,30){\sm$d$}
   \put(59,12){\sm$f$}
 \end{picture}
 \caption{Image of quadratic net from Example~\ref{Ex:MS}.}
 \label{F:RI}
\end{figure}  
The straight line boundary is the image of the arc bounding that lower-left face, and it folds over itself at its
endpoints. 
The images of the edges are labeled $a,\dotsc,l$.

Despite the potentially complicated geometry of $\phi(\Delta)$, the algebra is clear.
Define
\[
  \defcolor{C^r(\phi(\Delta))}\ :=\ \ker\Bigl(
  \bigoplus_{\phi(\sigma)\in\phi(\Delta)_2} R\ \xrightarrow{\ \ \delta_2\ \ }\
  \bigoplus_{\phi(\tau)\in\phi(\Delta)^\circ_1} \frac{R}{J(\phi(\tau))}\Bigr)
\]
as in Lemma~\ref{L:SplineModule}.

Let us recall a standard repackaging of the Hilbert function (see Exercise 10.12 in~\cite{Eisenbud}).
The \demph{Hilbert series} of $C^r(\Delta)$ is the generating series 
\[
  \defcolor{HS(C^r(\Delta),t)}\ :=\ \sum_{t\ge 0} \dim C^r_d(\Delta) t^d
\]
of the Hilbert function.
It is a rational function  of the form $\frac{p(t)}{(1-t)^3}$ for some polynomial $p(t)$ with integer
coefficients. 

\begin{theorem}\label{T:NSubdivisionStructure}
  Let $\Delta$ be a mesh whose edge forms lie in the net spanned by base point free forms $f,g,h\in S$ of degree $n$.
  Let  $\phi_N$ and $\phi^*_N$ be the associated maps.
 As $S$-modules, $C^r(\Delta)\cong C^r(\phi(\Delta))\otimes_R S.$  As (left) $R$-modules, 
\[
  C^r(\Delta)\ \cong\ C^r(\phi(\Delta))\otimes_{\R} \dfrac{S}{\langle f,g,h\rangle}\,.
\]
 Consequently, 
\begin{enumerate}
\item $\displaystyle{ \dim C^r_d(\Delta)\ =\
         \sum_{ni+j=d} \dim C^r_i(\phi(\Delta))\cdot\dim \Bigl( \frac{S}{\langle f,g,h\rangle} \Bigr)_{\!j}}\,$, and 
\item if $HS(C^r(\phi(\Delta)),t)=\dfrac{p(t)}{(1-t)^3}$, then $HS(C^r(\Delta),t)=\dfrac{p(t^n)}{(1-t)^3}$.
\end{enumerate}
\end{theorem}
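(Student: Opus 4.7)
The plan is to exploit the kernel description of spline modules from Lemma~\ref{L:SplineModule} together with the flatness of $S$ over $R$ via the pullback $\phi^*_N$. Because $f,g,h$ have no common zeros in $\P^2$, they form a homogeneous regular sequence in $S$, and the theory of graded complete intersections then gives that $S$ is a graded free $R$-module whose basis is any graded lift of a basis of $S/\langle f,g,h\rangle$. I would record this as an isomorphism $S \cong R \otimes_\R S/\langle f,g,h\rangle$ of graded $R$-modules, keeping in mind that $\phi^*$ carries $R$-degree $i$ to $S$-degree $ni$.

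Next I would apply Lemma~\ref{L:SplineModule} over $R$ to $\phi(\Delta)$ to obtain the left exact sequence
\[
0 \longrightarrow C^r(\phi(\Delta)) \longrightarrow \bigoplus_{\sigma} R \xrightarrow{\ \delta_2\ } \bigoplus_{\tau} R/\langle \ell_{\phi(\tau)}^{r+1}\rangle\,,
\]
with $\sigma,\tau$ indexed by the faces and interior edges of $\Delta$. Tensoring with $S$ over $R$ preserves left exactness, since $S$ is $R$-flat. Using $R \otimes_R S \simeq S$ together with the right-exact identification $R/\langle \ell_{\phi(\tau)}^{r+1}\rangle \otimes_R S \simeq S/\langle \phi^*(\ell_{\phi(\tau)}^{r+1})\rangle = S/\langle G_\tau^{r+1}\rangle$, the tensored sequence coincides with~\eqref{Eq:spline_kernel} for $\Delta$. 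This proves $C^r(\Delta) \cong C^r(\phi(\Delta)) \otimes_R S$ as $S$-modules. The $R$-module statement then follows by substituting the decomposition $S \cong R \otimes_\R S/\langle f,g,h\rangle$ into the right-hand tensor product and cancelling the $R$ factor.

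Consequence (1) is read off the $R$-module isomorphism by taking the degree-$d$ piece: $C^r_i(\phi(\Delta))$ contributes in $S$-degree $ni$ while $(S/\langle f,g,h\rangle)_j$ sits in $S$-degree $j$, yielding the stated convolution. For (2), the graded complete intersection $S/\langle f,g,h\rangle$ has Hilbert series $(1-t^n)^3/(1-t)^3$; after regrading $R$ so that $\phi^*$ becomes degree-preserving, the $S$-graded Hilbert series of $C^r(\phi(\Delta))$ becomes $p(t^n)/(1-t^n)^3$, and multiplying by the Hilbert series of $S/\langle f,g,h\rangle$ produces $p(t^n)/(1-t)^3$.

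The main obstacle is conceptual rather than algebraic: since $\phi$ may fold $\Delta$ (as witnessed by Example~\ref{Ex:MS}), the object $\phi(\Delta)$ cannot literally be interpreted as a subdivision of its set-theoretic image in $\P^2$. I would resolve this by treating $\phi(\Delta)$ as the combinatorial complex whose cells are in bijection with those of $\Delta$ and carry the corresponding linear edge forms $\ell_{\phi(\tau)}$. Once this labelling convention is fixed, $C^r(\phi(\Delta))$ is well-defined as the displayed kernel over $R$, and the cell-by-cell bijection ensures that the tensored complex matches~\eqref{Eq:spline_kernel} for $\Delta$, so the rest of the argument is the clean tensor-product calculation above.
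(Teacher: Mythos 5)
Your proposal is correct and follows essentially the same route as the paper: flatness/freeness of $S$ over $R$ coming from the regular sequence $f,g,h$, tensoring the kernel presentation of $C^r(\phi(\Delta))$ up to $S$, and then using $S\cong R\otimes_\R S/\langle f,g,h\rangle$ to get the $R$-module decomposition and the graded consequences. The only cosmetic differences are that the paper deduces flatness first (citing Hartshorne) and then freeness, whereas you go directly to freeness, and that you spell out the Hilbert series computation for part (2) and the combinatorial interpretation of $\phi(\Delta)$ more explicitly than the paper does.
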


\begin{proof}
  As $f,g,h$ form a regular sequence, the pullback map $\phi^*_N\colon R\to S$ is flat~\cite{Ha}.
  Thus extending scalars from $R$ to $S$, $M\mapsto M\otimes_RS$, preserves kernels and cokernels of $R$-module maps.
   For instance, $J(\phi(\tau))\otimes_R S\cong J(\tau)$ and $\frac{R}{J(\phi(\tau))}\otimes_R S\cong\frac{S}{J(\tau)}$.

 The $R$-module $C^r(\phi(\Delta))$ is the kernel of the map
\[
  \psi\ \colon\ R^{\phi(\Delta)_2}\ \xrightarrow{\ \delta_2\ }\
   \bigoplus_{\phi(\tau)\in\phi(\Delta)^\circ_1} \frac{R}{J(\phi(\tau))}\,.
\]
Tensoring with $S$ yields (via flatness) the map, 
\[
  \psi\otimes_R S\ \colon\ S^{\Delta_2}\ \xrightarrow{\ \delta_2\ }\
  \bigoplus_{\tau\in\Delta^\circ_1}\frac{S}{J(\tau)}\,,
\]
whose kernel is $C^r(\Delta)$.
Since ${-}\otimes_R S$ preserves kernels, $C^r(\Delta)\cong C^r(\phi(\Delta))\otimes_R S$.

Finitely generated flat graded $R$-modules are free~\cite[Cor.~6.6]{Eisenbud}, so $S$ is a free $R$-module.
It is minimally generated as an $R$-module by $S/\frakm_R S=S/\langle f,g,h\rangle$, where $\frakm_R=\langle
u,v,w \rangle$.
Thus, as left $R$-modules, $S\cong R\otimes_\R S/\langle f,g,h\rangle$.
So, as left $R$-modules,
\[
  C^r(\Delta)\ \cong\  C^r(\phi(\Delta))\otimes_R S\ \cong\
  (C^r(\phi(\Delta))\otimes_R R)\otimes_\R \frac{S}{\langle f,g,h\rangle}\ \cong\
  C^r(\phi(\Delta))\otimes_{\R} \frac{S}{\langle f,g,h\rangle}\,.
\]
The statements concerning $\dim C^r_d(\Delta)$ and $HS(C^r(\Delta),t)$ follow from these tensor product
decompositions. 
\end{proof}

Reading off the Hilbert polynomial of $C^r(\Delta)$ from Theorem~\ref{T:NSubdivisionStructure} is not difficult but it is
technical.
Set $\defcolor{c_n(j)}:=\dim(S/\langle f,g,h\rangle)_j$.
Since $f,g,h$ form a regular sequence and each has degree $n$, a Hilbert series computation shows that
$c_j(n)$ is the coefficient of $t^j$ in the expansion of $(1+t+t^2+\cdots+t^{n-1})^3$.

\begin{corollary}\label{C:Nets}
  Let $\Delta$ be a mesh with edge forms from the net $N$ spanned by base point free forms $f,g,h\in S$ of degree $n$, with 
   associated maps $\phi_N$ and $\phi^*_N$.
  Suppose that $\HP(C^r(\phi(\Delta)),d)=ad^2+bd+c$, where $a,b,c$ are constants.
  Then $\HP(C^r(\Delta),d)=Ad^2+Bd+C$, where
 \begin{eqnarray*}
   A & =&a\,,\\
   B & =& bn-2\dfrac{n^2-1+2c_n(2n)}{n}a\,,\ \mbox{ and}\\
   C & =& (n^2-1+3c_n(2n))a-(n^2-1+c_n(2n))b+n^2c\, .
 \end{eqnarray*}
 The postulation number for $\dim C^r_d(\Delta)$ is at most $n(d_0+3)-3$, where 
  $d_0$ is the postulation number for $\dim C_d^r(\phi(\Delta))$.
\end{corollary}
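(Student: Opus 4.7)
The plan is to recover the Hilbert polynomial of $C^r(\Delta)$ from its Hilbert series $p(t^n)/(1-t)^3$, which is furnished by Theorem \ref{T:NSubdivisionStructure}(2). For any finitely generated graded $S$-module $M$ with Hilbert series $q(t)/(1-t)^3$ (with $q\in\Z[t]$), the identity
\[
\frac{q(t)}{(1-t)^3}\ =\ \sum_d t^d \sum_k q_k \binom{d-k+2}{2}
\]
yields $\HF(M,d) = \sum_k q_k \binom{d-k+2}{2}$ as soon as $d \geq \deg q - 2$, since $\binom{d-k+2}{2}$ vanishes at $k = d+1$ and $k = d+2$ so any tail terms with $k>d$ contribute nothing. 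Viewing $\binom{d-k+2}{2}$ as a polynomial in $d$ and grouping by powers of $d$, the three coefficients of $\HP(M,d)$ become explicit linear combinations of $q(1), q'(1), q''(1)$.

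First I would apply this with $q = p$ to translate the known coefficients $a, b, c$ into data about $p$ at $t = 1$; a direct computation yields $p(1) = 2a$, $p'(1) = 3a - b$, and $p''(1) = 2a - 2b + 2c$. Next I apply the same formula with $q(t) = p(t^n)$, using the chain rule to compute $q(1) = p(1)$, $q'(1) = np'(1)$, and $q''(1) = n(n-1)p'(1) + n^2 p''(1)$. Substituting and collecting like terms gives $A, B, C$ as linear combinations of $a, b, c$ and $n$. To cast the result in the form stated in the corollary, I would use the identity
\[
c_n(2n)\ =\ \tfrac{1}{2}(n-1)(n-2)\,,
\]
obtained by extracting the coefficient of $t^{2n}$ from $\left(\tfrac{1-t^n}{1-t}\right)^3 = (1 - 3t^n + 3t^{2n} - t^{3n}) \sum_m \binom{m+2}{2} t^m$.

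For the postulation bound, the same expansion shows that $\HF(M,d) = \HP(M,d)$ whenever $d \geq \deg q - 2$, so the postulation number of $M$ is at most $\deg q - 3$. Applied to $C^r(\phi(\Delta))$ this gives $\deg p \leq d_0 + 3$, hence $\deg p(t^n) = n\deg p \leq n(d_0+3)$, and applying the bound once more to $C^r(\Delta)$ yields postulation number at most $n(d_0 + 3) - 3$. The most delicate step is the algebraic bookkeeping that converts the raw coefficients in $p(1), p'(1), p''(1)$ into the compact expressions involving $c_n(2n)$; the structure of the $C$-coefficient computation makes clear why the relevant $c_n$-value that enters is $c_n(2n)$, namely because $q''(1)$ is where the constants $(n-1)(n-2)$ and related polynomial identities in $n$ first appear.
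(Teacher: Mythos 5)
Your route is sound and genuinely different from the paper's. The paper works from part (1) of Theorem~\ref{T:NSubdivisionStructure}: it evaluates $\HP(C^r(\Delta),d)$ at $d=ni$, where the convolution collapses to $\HP(C^r(\phi(\Delta)),i)+c_n(n)\HP(C^r(\phi(\Delta)),i{-}1)+c_n(2n)\HP(C^r(\phi(\Delta)),i{-}2)$, observes that this is an identity of polynomials in $i$, equates coefficients, and simplifies using Lemma~\ref{L:CNsums}. You work from part (2), reading the coefficients of the Hilbert polynomial off $q(1)$, $q'(1)$, $q''(1)$ for $q(t)=p(t^n)$. Your intermediate values $p(1)=2a$, $p'(1)=3a-b$, $p''(1)=2a-2b+2c$ and the chain-rule identities check out, and your $C$ agrees with the stated one. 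Two points need attention.

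First, the postulation bound as you argue it runs the wrong way. From ``$\HF=\HP$ for $d\ge\deg q-2$'' you may conclude that the postulation number is at most $\deg q-3$; applied to $C^r(\phi(\Delta))$ this gives $d_0\le\deg p-3$, i.e.\ $\deg p\ge d_0+3$, whereas you need $\deg p\le d_0+3$ in order to bound $\deg p(t^n)$. The fix is one line: at $d=\deg q-3$ the difference $\HF(M,d)-\HP(M,d)$ equals $-q_{\deg q}\cdot\binom{-1}{2}=-q_{\deg q}\ne 0$, so when $\deg q\ge 3$ the postulation number is exactly $\deg q-3$; hence $\deg p=d_0+3$ (and $\deg p\le d_0+3$ holds trivially if $\deg p\le 2$), after which the rest of your argument goes through.

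Second, if you finish the coefficient computation you will obtain $B=\tfrac{3}{2}q(1)-q'(1)=3a-n(3a-b)=bn-3a(n-1)$, which equals $bn-\tfrac{2}{n}\bigl(n^2-1+c_n(2n)\bigr)a$ but \emph{not} the stated $bn-\tfrac{2}{n}\bigl(n^2-1+2c_n(2n)\bigr)a$; the two differ for $n\ge 3$ (test with $p=1$ and $n=3$, where the module is $S$ and $B$ must be $\tfrac{3}{2}$, not $\tfrac{7}{6}$). The statement's $B$ appears to carry a typo: the paper's own intermediate relation $Bn=bn^2-2a(c_n(n)+2c_n(2n))$ combined with $c_n(n)=n^2-1-c_n(2n)$ yields $c_n(n)+2c_n(2n)=n^2-1+c_n(2n)$. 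So do not plan to ``cast'' your answer into the displayed form; your computation gives the correct coefficient, and you should record the discrepancy rather than force agreement.
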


That is, if $\dim C^r_d(\phi(\Delta))=\HP(C^r(\phi(\Delta)),d)$ for $d> d_0$, then when  $d>n(d_0+3)-3$, 
we have $\dim C^r_d(\Delta)=\HP(C^r(\Delta),d)$.

\begin{proof}
  From Theorem~\ref{T:NSubdivisionStructure}(1), for all $i\gg 0$ , $\HP(C^r(\Delta),ni)$ equals 
  \[
    \HP(C^r(\phi(\Delta)),i)\ +\ c_n(n)\HP(C^r(\phi(\Delta)),i{-}1)\ +\ c_n(2n)\HP(C^r(\phi(\Delta)),i{-}2)\,.
 \]
 Since this is an identity among polynomials, it holds for all $i$.
 Expanding both sides as polynomials in $i$ and equating coefficients yields
 \begin{eqnarray*}
   An^2 & =&a(1+c_n(n)+c_n(2n))\,,\\
   Bn & =& b(1+c_n(n)+c_n(2n))-2a(c_n(n)+2c_n(2n))\,,\ \mbox{ and}\\
   C & =& a(c_n(n)+4c_n(2n))-b(c_n(n)+2c_n(2n))+c(1+c_n(n)+c_n(2n))\,.
\end{eqnarray*}
The equations for $A,B,C$ in terms of $a,b,c$ follow from the identity $1+c_n(n)+c_n(2n)=n^2$
of Lemma~\ref{L:CNsums}.

Now suppose $\dim C^r_d(\phi(\Delta))=\HP(C^r(\phi(\Delta)),d)$ for $d>d_0$.
Then $\dim C^r_d(\Delta)=\HP(C^r(\Delta),d)$ if $\dim C^r_i(\phi(\Delta))$ for $i\le d_0$ does not contribute to
$\dim C^r_d(\Delta)$.
Since $c_n(j)=0$ for $j>3n-3$, Theorem~\ref{T:NSubdivisionStructure} implies that this happens when $d>nd_0+3n-3$, or
$d>n(d_0+3)-3$.
This completes the proof.
\end{proof}

\begin{lemma}\label{L:CNsums}
 For $0\leq i<n$, we have $c_n(i)+c_n(i{+}n)+c_n(i{+}2n)=n^2$.
\end{lemma}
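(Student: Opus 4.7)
The plan is to give a short combinatorial proof starting from the identification of $c_n(j)$ as the coefficient of $t^j$ in the polynomial
\[
   P(t)\ =\ (1+t+t^2+\cdots+t^{n-1})^3\,,
\]
which was already noted just before the statement of the lemma. Equivalently, $c_n(j)$ counts the number of triples $(a_1,a_2,a_3)\in\{0,1,\dotsc,n{-}1\}^3$ with $a_1+a_2+a_3=j$.

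First I would rewrite the left-hand side of the lemma as a single count: $c_n(i)+c_n(i{+}n)+c_n(i{+}2n)$ is the number of triples $(a_1,a_2,a_3)\in\{0,1,\dotsc,n{-}1\}^3$ whose sum lies in $\{i,\,i{+}n,\,i{+}2n\}$. Since $0\leq a_1+a_2+a_3\leq 3n-3$, and $0\leq i<n$, the hypothesis $0\le i<n$ guarantees that these are exactly the values in $\{0,1,\dotsc,3n{-}3\}$ congruent to $i$ modulo $n$ (note $i+3n\geq 3n>3n-3$, so no further residue class representative occurs).

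Next, I would fix $(a_1,a_2)\in\{0,\dotsc,n{-}1\}^2$ arbitrarily and observe that the condition $a_1+a_2+a_3\equiv i \pmod{n}$ determines $a_3$ uniquely in $\{0,\dotsc,n{-}1\}$. Thus each of the $n^2$ choices of $(a_1,a_2)$ contributes exactly one valid triple, and the total count is $n^2$.

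I do not expect a genuine obstacle here; the only thing to be careful about is the bookkeeping showing that the three residues $i$, $i+n$, $i+2n$ do exhaust the values of $a_1+a_2+a_3$ congruent to $i$ modulo $n$ in the allowed range $[0,3n{-}3]$, which is precisely where the assumption $0\le i<n$ is used. An alternative, essentially equivalent, approach would be to extract the three relevant coefficients from $P(t)=(1-t^n)^3/(1-t)^3$ by averaging $P(\zeta t)$ over cube roots of unity $\zeta$ of $t^n$ and evaluating at $t=1$, but the counting argument above is cleaner and avoids computing with roots of unity.
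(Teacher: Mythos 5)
Your proof is correct, but it takes a genuinely different route from the paper's. You interpret $c_n(j)$ as the number of triples $(a_1,a_2,a_3)\in\{0,\dotsc,n{-}1\}^3$ with $a_1+a_2+a_3=j$ and observe that the left-hand side counts all triples whose sum is congruent to $i$ modulo $n$; fixing $(a_1,a_2)$ leaves exactly one admissible $a_3$, giving $n^2$. The paper instead works with the generating polynomial: it forms $p(t)=\sum_{i=0}^{n-1}\bigl(c_n(i)+c_n(i{+}n)+c_n(i{+}2n)\bigr)t^i$, notes that $p(\zeta)=(1+\zeta+\dotsb+\zeta^{n-1})^3=0$ for every nontrivial $n$th root of unity $\zeta$, concludes that $p(t)$ is a constant multiple of $1+t+\dotsb+t^{n-1}$ (so its coefficients are all equal), and evaluates $p(1)=n^3$ to get the common value $n^2$. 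Your bijective argument is more elementary, avoids roots of unity entirely, and generalizes immediately to any number of factors; the paper's root-of-unity filter is the standard algebraic device and needs no combinatorial reinterpretation of $c_n$. One tiny imprecision in your write-up: for $i=n-1$ or $i=n-2$ the value $i+2n$ exceeds $3n-3$, so $\{i,\,i{+}n,\,i{+}2n\}$ is not literally ``exactly'' the set of representatives of the residue class inside $\{0,\dotsc,3n{-}3\}$; but since $c_n(i{+}2n)=0$ in that case and every achievable sum congruent to $i$ still lies in $\{i,\,i{+}n,\,i{+}2n\}$, the count is unaffected. (Also, in your closing aside the coefficient extraction would average over $n$th roots of unity rather than cube roots, but that does not touch your main argument.)
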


\begin{proof}
  If $\zeta$ is an $n$th root of unity, then $a\zeta^{i+n}=a\zeta^i$.
  When  $\zeta\neq 1$,  it is a root of
  \[
    \defcolor{p(t)}\ :=\ \sum_{i=0}^{n-1}\bigl(c_n(i)+c_n(i{+}n)+c_n(i{+}2n)\bigr)t^i\,.
  \]
  as $p(\zeta)=\sum_{i=0}^{3n-3} c_n(i)\zeta^i=(1+\zeta+\zeta^2+\dotsb+\zeta^{n-1})^3=0$.
  Thus $p(t)$ is a constant multiple of $1+t+\dotsb+t^{n-1}$ and so $c_n(i)+c_n(i{+}n)+c_n(i{+}2n)$ is a constant.
  Noting that $p(1)=(1+\dotsb+1)^3=n^3$ completes the proof.
\end{proof}

\begin{example}\label{E:MorganScottNet}
  Let $\phi$ be the rational map for the net $N$ of Example~\ref{Ex:MS}.
  If $\Delta$ is a mesh whose edge forms come from $N$, then $\phi(\Delta)$ is a rectilinear mesh.
  Using Corollary~\ref{C:Nets}, if $\HP(C^r(\phi(\Delta)),d)=ad^2+bd+c$ then 
 \begin{equation}\label{E:NetHilbert}
    \HP(C^r(\Delta),d)\ =\ ad^2+(2b-3a)d+(3a-3b+4c)\,.
  \end{equation}

  Suppose that \defcolor{$\Delta$} is the mesh from Example~\ref{Ex:MS}.
  Then $\phi(\Delta)$ is the mesh with the seven triangles shown in Figure~\ref{F:RI}.
  (The algebra defining $C^r(\phi(\Delta))$ does not see the lower left lune-shaped region.)
  This is the projection of an octahedron to $\R^2$, and the three lines connecting the images of antipodal points of the 
  octahedron intersect in a common point.
  We show this in Figure~\ref{F:NMS}.
  \begin{figure}[htb]
   \centering
    \includegraphics{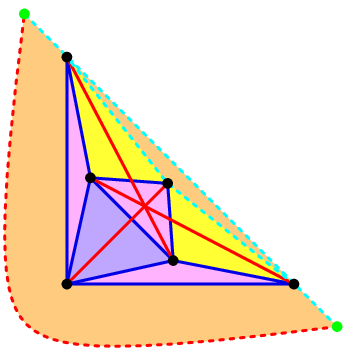}  \qquad\qquad
    \includegraphics{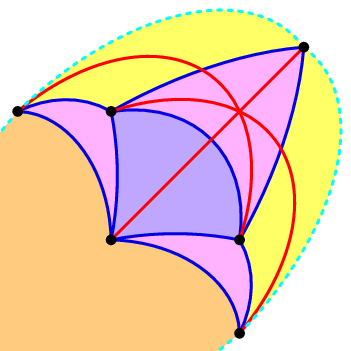}
   \caption{Lines in $\phi(\Delta)$ and their preimage curves in $\Delta$.}
   \label{F:NMS}
  \end{figure}  
  As shown by Morgan and Scott~\cite{MoSc77}, such an octahedral configuration illustrates that the space of quadratic
  $C^1$ splines on a rectilinear mesh is sensitive to geometry.
  Namely, there is an `unexpected' spline in $C^1_2(\phi(\Delta))$ due to the three  coincident lines.
  This propagates to unexpected splines in $C^1_d(\Delta)$ for $d=4,5,6,7$.

  We explain this.
  Let $\Delta'$ be a mesh whose edges come from the net $N$, with the same topology as $\Delta$, but with some of its
  vertices perturbed so that the curves connecting opposite vertices are not coincident, and thus 
  $\phi(\Delta')$ does not satisfy the Morgan-Scott intersection property.
  Let $\defcolor{P_\phi(d)}:=\HP(C^1(\phi(\Delta)),d)=\HP(C^1(\phi(\Delta')),d)=\frac{7}{2}d^2-\frac{15}{2}d+7$.
  (We made this computation in Macaulay2~\cite{M2}; it also follows from~\cite[Cor.~4.5]{ShSt97a} and coincides with
  Schumaker's lower bound for $d\ge 4$~\cite{AS90}.)

 Let $\defcolor{P(d)}:=\HP(C^1(\Delta),d)=\HP(C^1(\Delta'),d)$.
 By~\eqref{E:NetHilbert}, $P(d)=\frac{7}{2}d^2-\frac{51}{2}d+61$.
 Since $\dim C^1_d(\phi(\Delta'))$ agrees with $P_\phi(d)$ for $d>d_0=0$, by Corollary~\ref{C:Nets} $\dim C^1_d(\Delta')$
 agrees with $P(d)$ for $d>n(d_0+3)-3=3$.
 On the other hand, $\dim C^1_d(\Delta)$ agrees with $P_\phi(d)$ for $d>d_0=2$, so $\dim C^1_d(\Delta)$ agrees with $P(d)$
 for $d>n(d_0+3)-3=7$.
 Table~\ref{T:MorganScottNet} displays values of the Hilbert functions and Hilbert polynomials.
 \begin{table}
  \caption{Hilbert functions for Morgan-Scott nets in Example~\ref{E:MorganScottNet}}\label{T:MorganScottNet}
  \renewcommand{\arraystretch}{1.2}
  \begin{tabular}{|c|ccccccc|c|}\hline
  $d$ & 0 & 1 & 2 & 3 & 4 & 5 & 6 & $P_\phi(d)$ \\ \hline 
  $\dim C^1_d(\phi(\Delta'))$ & \underline{1} & 3 & 6 & 16 & 33 & 57 & 88 & $\frac{7}{2}d^2-\frac{15}{2}d+7$ \\  \hline
  $\dim C^1_d(\phi(\Delta))$ & 1 & 3 &\defcolor{\underline{{\bf 7}}}& 16 & 33 & 57 & 88 & $\frac{7}{2}d^2-\frac{15}{2}d+7$\\\hline
 \end{tabular}

 \vspace{10 pt}

 \begin{tabular}{|c|ccccccccccc|c|}\hline
   $d$ & 0 & 1 & 2 & 3 & 4 & 5 & 6 & 7 & 8 & 9 & 10 & $P(d)$\\\hline
   $\dim C^1_d(\Delta')$ & 1 & 3 & 6 & \underline{10} & 15 & 21 & 34 & 54 & 81 & 115 & 156 &
          $\frac{7}{2}d^2-\frac{51}{2}d+61$\\\hline 
   $\dim C^1_d(\Delta)$ & 1 & 3 & 6 & 10 &\defcolor{{\bf 16}} & \defcolor{{\bf 24}} & 
         \defcolor{{\bf 37}} &\defcolor{\underline{{\bf 55}}}& 81 & 115 & 156 & $\frac{7}{2}d^2-\frac{51}{2}d+61$\\\hline
 \end{tabular}
 \end{table}
 The difference of $1$ between $\dim C^1_2(\phi(\Delta'))$ and $\dim C^1_2(\phi(\Delta))$ results in
 differences of $1,3,3,1$ in degrees $d=4,5,6,7$ between $\dim C^1_d(\Delta')$ and $\dim C^1_d(\Delta)$.
 These differences are the coefficients of $(1+t)^3$.
 All of these observations are explained by Theorem~\ref{T:NSubdivisionStructure}.\hfill$\diamond$
\end{example}

\section{Generic meshes}\label{S:generic}

We determine the Hilbert polynomial of $C^r(\Delta)$ for a generic mesh $\Delta$.  
We explain exactly what we mean by a generic mesh in Definition~\ref{D:generic}, which
extends conditions from~\cite{DiPSS} by an acyclicity condition on $\Delta$.  
Our arguments follow the those in~\cite{ShSt97a,ShSt97b} with modifications reflecting the more complicated
geometry of the mesh as in~\cite{DiP_Assoc,McSh09}. 

We first define open subsets of $\Delta$ that encode the behavior of the chain complex $\calS/\calJ$ under localization 
at a prime ideal $P\subset S$ and are used in our characterization of generic meshes.
Two faces $\sigma,\sigma'$ of $\Delta$ are \demph{$P$-adjacent} if they are adjacent in $\Delta$ and if there is an edge 
$\tau\in\sigma\cap\sigma'$ between them whose edge form $G_\tau$ lies in $P$.
That is, $P$ is either the ideal of the curve underlying $\tau$, or it is the ideal of a point lying along this curve, or
it is the ideal of two complex conjugate points through which the curve passes. 

Let \defcolor{$\sim_P$} be the equivalence relation on faces of $\Delta$ generated by $P$-adjacency.
Then $\sigma\sim_P\sigma'$ if and only if there are sequences $\sigma=\sigma_0,\sigma_1,\dotsc,\sigma_k=\sigma'$ of
faces and edges $\tau_1,\dotsc,\tau_k$ such that for $i=1,\ldots,k$, $\tau_i$ is between $\sigma_{i-1}$ and $\sigma_i$ and 
$G_{\tau_i}\in P$.
Write \defcolor{$[\sigma]_P$} for the equivalence class of $\sigma$ under $\sim_P$, and use this to define a subset 
\defcolor{$\Delta_{P,\sigma}$} of $\Delta$ whose faces are $[\sigma]_P$, and
whose edges and vertices are as follows.
\begin{itemize}
\item edges $\tau$ of $\Delta_{P,\sigma}$ are interior edges $\tau$ of $\Delta$ with $G_\tau\in P$ and which lie along a face
  in $[\sigma]_P$. 
\item vertices $v$ of $\Delta_{P,\sigma}$ are interior vertices $v$ of $\Delta$ such that every interior edge $\tau$
  incident on $v$ has $G_\tau\in P$ and $v$ lies on a face in $[\sigma]_P$.
\end{itemize}

The union of the cells in $\Delta_{P,\sigma}$ is a connected open subset of $|\Delta|$.
We also write $\Delta_{P,\sigma}$ for this union, \defcolor{$\partial\Delta_{P,\sigma}$} for its
boundary, and \defcolor{$\overline{\Delta_{P,\sigma}}$} for the closure of $\Delta_{P,\sigma}$.
Note that  $\partial\Delta_{P,\sigma}$ consists of those edges $\tau$ and vertices
$v$ in $\overline{\Delta_{P,\sigma}}$ with $G_\tau\notin P$ and $J(v)\not\subset P$, respectively.

\begin{example}\label{E:TopologicalCondition}
  Consider the mesh $\Delta$ from Figure~\ref{F:cell_complex} whose faces are labeled as in 
  Figure~\ref{F:Topological_Condition}.

  Let $P=\langle yz+x^2-z^2 \rangle$ be the ideal of the downward facing parabola.
  There are four equivalence classes of $P$-adjacent faces,
  $\{\sigma_1,\sigma_2,\sigma_3\}$, $\{\sigma_4,\sigma_5\}$, $\{\sigma_6,\sigma_7\}$ and $\{\sigma_8\}$.
\begin{figure}[htb]
\centering
\begin{picture}(112,80)
 \put(0,0){\includegraphics{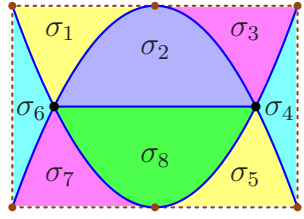}}
 \put(51,60){$\sigma_2$}   
 \put(51,19){$\sigma_8$}   
 \put(15,67){$\sigma_1$}    \put(85,67){$\sigma_3$}   
 \put(15,12){$\sigma_7$}    \put(85,12){$\sigma_5$}   
 \put(98,37){$\sigma_4$}   
 \put( 4,37){$\sigma_6$}   
\end{picture}

\caption{Mesh for Example~\ref{E:GenericityAssumptions}.}\label{F:Topological_Condition}	
\end{figure}
Then $\Delta_{P,\sigma_1}$ consists of the faces $\sigma_1,\sigma_2,$ and
$\sigma_3$, along with the two edges between them.
Also, $\Delta_{P,\sigma_4}$ consists of the faces $\sigma_4,\sigma_5$ and the edge between them.
Likewise, $\Delta_{P,\sigma_6}$ consists of the faces $\sigma_6,\sigma_7$ and the edge between them.
Finally, $\Delta_{P,\sigma_8}$ consists of the single face $\sigma_8$.
All subsets $\Delta_{P,\sigma}$ are contractible.

Now let $P=\langle x+z,y\rangle$ be the prime ideal of the left interior vertex.
Then all faces are equivalent as every edge $\tau$ lies along a curve incident to this vertex.
Thus $\Delta_{P,\sigma_1}$ contains every interior cell of $\Delta$ and it is contractible.

Let \defcolor{$Q$} be the prime ideal $\langle x,z \rangle$ which defines the point at infinity where the two parabolas
meet.
Again, all faces are equivalent under $Q$-adjacency.
However, $\Delta_{Q,\sigma_1}$ does not contain all interior cells of $\Delta$.
Neither the horizontal edge between the two interior vertices $(-1,0)$ and $(1,0)$ nor those vertices lie in
$\Delta_{Q,\sigma_1}$.
Thus $\Delta_{Q,\sigma_1}$ is not contractible.

If $P$ is a prime ideal defining a curve that is neither a parabola nor line from $\Delta$ nor
a prime ideal of a point on one of these parabolas or lines, then $\Delta_{P,\sigma_i}$ consists only of
$\sigma_i$ for all $i=1,\ldots,8$, and it is again contractible.\hfill$\diamond$
\end{example}

For any prime ideal $P\subset S$, we define the complex
 \[ 
  \defcolor{\calS[\Delta_{P,\sigma}]} \ \colon\ 
  \bigoplus_{\sigma'\in [\sigma]_P} S\  \xrightarrow{\ \partial_2\ }\
  \bigoplus_{\tau\in\Delta_{P,\sigma}} S\  \xrightarrow{\ \partial_1\ }\
  \bigoplus_{v\in \Delta_{P,\sigma}} S\,.
 \]
This is the cellular chain complex computing the homology of $\overline{\Delta_{P,\sigma}}$ relative to $\partial \Delta_{P,\sigma}$ so that
$H_i(\calS[\Delta_{P,\sigma}])\cong H_i(\overline{\Delta_{P,\sigma}},\partial\Delta_{P,\sigma};S)$.
We also have the chain complexes
 \[ 
  \defcolor{\calJ[\Delta_{P,\sigma}]}\ \colon\ 
  \bigoplus_{\tau\in\Delta_{P,\sigma}} J(\tau)\ \xrightarrow{\ \partial_1\ }\
  \bigoplus_{\tau\in\Delta_{P,\sigma}} J(v)
 \]
and
 \[ 
   \defcolor{\calS/\calJ[\Delta_{P,\sigma}]}\ \colon\
   \bigoplus_{\sigma'\in [\sigma]_P} S\ \xrightarrow{\ \partial_2\ }\
   \bigoplus_{\tau\in\Delta_{P,\sigma}}\frac{S}{J(\tau)}\  \xrightarrow{\ \partial_1\ }\
   \bigoplus_{v\in \Delta_{P,\sigma}} \frac{S}{J(v)}\,.
 \]
These fit into the short exact sequence of chain complexes, 
 \begin{equation}\label{E:JSSmodJ}
  0\ \rightarrow\  \calJ[\Delta_{P,\sigma}]\ \rightarrow \ \calS[\Delta_{P,\sigma}]\
  \rightarrow\ \calS/\calJ[\Delta_{P,\sigma}]\ \rightarrow\ 0\,.
 \end{equation}
%

\begin{proposition}\label{prop:ChainLocalization}
 For a prime ideal $P\subset S$ there is an isomorphism of chain complexes
\[
  \left(\calS/\calJ[\Delta]\right)_P\ \cong\
  \bigoplus_{\sigma\in\Sigma_P} \left(\calS/\calJ[\Delta_{P,\sigma}]\right)_P\, ,
\]
where $\defcolor{\Sigma_P}\subset\Delta_2$ is a set of representatives of $\sim_P$ equivalence classes.
\end{proposition}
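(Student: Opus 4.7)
The plan is to use localization to strip away every term of $\calS/\calJ[\Delta]$ that is not relevant to $P$, and then observe that the remaining terms decompose as a direct sum indexed by the $\sim_P$-equivalence classes of faces. The key algebraic input is that localization at $P$ turns any form not in $P$ into a unit.

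\textbf{Step 1: killing the irrelevant terms.} For an interior edge $\tau$ with $G_\tau\notin P$, the form $G_\tau^{r+1}$ becomes a unit in $S_P$, so $J(\tau)_P=(G_\tau^{r+1})_P=S_P$, and hence $(S/J(\tau))_P=0$. Similarly, for an interior vertex $v$ with $J(v)\not\subset P$, some generator of $J(v)$ is a unit in $S_P$, so $J(v)_P=S_P$ and $(S/J(v))_P=0$. Thus, after localizing $\calS/\calJ[\Delta]$ at $P$, only the summands coming from edges $\tau$ with $G_\tau\in P$ and vertices $v$ with $J(v)\subset P$ survive. All face summands survive since they are already copies of $S$.

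\textbf{Step 2: reading off the surviving cells.} By the definition of $J(v)$ as the sum of the $J(\tau)$ for $\tau$ incident on $v$, the condition $J(v)\subset P$ is equivalent to $G_\tau\in P$ for every interior edge $\tau$ incident on $v$. This is precisely the condition appearing in the definition of the vertices of $\Delta_{P,\sigma}$. Likewise, an edge $\tau$ with $G_\tau\in P$ is, by definition, an edge of $\Delta_{P,\sigma}$ for the unique equivalence class $[\sigma]_P$ of the two faces it separates. Thus the surviving cells under localization are exactly the cells appearing in the various $\Delta_{P,\sigma}$.

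\textbf{Step 3: the direct sum decomposition.} I would now check that each surviving cell belongs to one and only one of the subsets $\Delta_{P,\sigma}$ as $\sigma$ ranges over $\Sigma_P$. Faces are partitioned by $\sim_P$ by construction. An edge $\tau$ with $G_\tau\in P$ is by definition $P$-adjacent to both faces it separates, so those two faces lie in a common equivalence class, and $\tau$ belongs only to that one $\Delta_{P,\sigma}$. A vertex $v$ with $J(v)\subset P$ has every incident interior edge in $P$, so all faces meeting $v$ are $P$-equivalent (using hereditariness of $\Delta$, which guarantees a chain of edge-sharing faces around $v$); therefore $v$ lies in a single $\Delta_{P,\sigma}$. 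This gives a bijection between cells of the localized complex and the disjoint union of the cells of the $\Delta_{P,\sigma}$.

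\textbf{Step 4: compatibility of the differentials.} Finally, I would verify that the differentials agree on each summand. The boundary of a face $\sigma'\in[\sigma]_P$ in $(\calS/\calJ[\Delta])_P$ only picks up contributions from edges $\tau\subset\partial\sigma'$ whose $(S/J(\tau))_P$ survived, i.e.\ those with $G_\tau\in P$; these are exactly the edges of $\Delta_{P,\sigma}$ incident on $\sigma'$. The same holds for $\partial_1$ on the edge summands. Thus the localized differential preserves the decomposition indexed by $\Sigma_P$ and restricts on each block to the differential of $(\calS/\calJ[\Delta_{P,\sigma}])_P$. Assembling Steps 1--4 yields the claimed isomorphism.

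The main obstacle, though conceptually minor, is Step 3: one must use hereditariness to conclude that every face meeting a vertex $v$ with $J(v)\subset P$ lies in a single $\sim_P$-class, and one must carefully track that the boundary in the localized complex does not mix different equivalence classes. Everything else is immediate from the fact that localization away from the vanishing locus of $G_\tau$ trivializes $S/J(\tau)$.
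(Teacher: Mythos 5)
Your proof is correct and follows essentially the same route as the paper: the paper's argument consists exactly of your Step 1 observations (that $(S/J(\tau))_P=0$ unless $G_\tau\in P$ and $(S/J(v))_P=0$ unless every incident edge form lies in $P$), with the bookkeeping of Steps 2--4 left implicit. Your additional care in Step 3 about each surviving vertex lying in exactly one $\Delta_{P,\sigma}$ is a worthwhile detail the paper glosses over, but it does not change the approach.
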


\begin{proof}
  If $\tau\in\Delta^\circ_1$, then $(S/J(\tau))_P=0$ unless $G_\tau\in P$.
  Likewise, if $v\in\Delta^\circ_0$, then $(S/J(v))_P=0$ unless $G_\tau\in P$ for every $\tau$ having endpoint $v$.
  The localization assertion follows from these two observations.
\end{proof}

We give a vanishing lemma for the homology of the chain complex $\calS[\Delta_{P,\sigma}]$.

\begin{lemma}\label{L:Contractible}
  We have that $H_1(\calS[\Delta_{P,\sigma}])=0$ if and only if $\Delta_{P,\sigma}$ is contractible.
\end{lemma}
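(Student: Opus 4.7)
The plan is to identify the relative homology computed by $\calS[\Delta_{P,\sigma}]$ with the ordinary singular homology of the open planar region $\Delta_{P,\sigma}$ via duality, then conclude using the topology of open subsets of $\R^2$.

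First, I would observe that $\calS[\Delta_{P,\sigma}]$ is a complex of free $S$-modules, and its differentials are the cellular boundary maps for the pair $(\overline{\Delta_{P,\sigma}}, \partial \Delta_{P,\sigma})$ extended by scalars from $\R$ to $S$. Since $S$ is $\R$-flat, this yields $H_i(\calS[\Delta_{P,\sigma}]) \cong H_i(\overline{\Delta_{P,\sigma}}, \partial \Delta_{P,\sigma}; \R) \otimes_\R S$. Hence the vanishing of $H_1(\calS[\Delta_{P,\sigma}])$ as an $S$-module is equivalent to the vanishing of $H_1(\overline{\Delta_{P,\sigma}}, \partial \Delta_{P,\sigma}; \R)$, and we are reduced to a purely topological statement.

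Next I would apply Poincar\'e--Lefschetz duality. Since $\Delta_{P,\sigma}$ is open in $\R^2$, it is an oriented $2$-manifold. For the CW pair $(\overline{\Delta_{P,\sigma}}, \partial \Delta_{P,\sigma})$ inherited from $\Delta$, Lefschetz duality identifies $H_1(\overline{\Delta_{P,\sigma}}, \partial \Delta_{P,\sigma}; \R)$ with the compactly supported cohomology $H^1_c(\Delta_{P,\sigma}; \R)$, and Poincar\'e duality for the open oriented $2$-manifold $\Delta_{P,\sigma}$ further identifies $H^1_c(\Delta_{P,\sigma}; \R) \cong H_1(\Delta_{P,\sigma}; \R)$. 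Combining the two, $H_1(\calS[\Delta_{P,\sigma}]) = 0$ if and only if $H_1(\Delta_{P,\sigma}; \R) = 0$.

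Finally, I would invoke the standard topological fact that a connected open subset $U \subset \R^2$ is contractible if and only if $H_1(U; \R) = 0$. One direction is immediate; the other uses the classification of open surfaces, which says that such $U$ is a $K(\pi,1)$ with $\pi = \pi_1(U)$ a free group, so trivial $H_1$ forces $\pi_1$ to be trivial, hence $U$ to be simply connected and, by the Riemann mapping theorem (or a more elementary direct argument), contractible. Applied to $\Delta_{P,\sigma}$, which is connected by hypothesis, this finishes the proof.

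The main obstacle is the duality step: one must verify that $(\overline{\Delta_{P,\sigma}}, \partial \Delta_{P,\sigma})$ is a sufficiently well-behaved CW pair for Lefschetz duality to apply. This follows because $\Delta$ is a finite regular cell complex with piecewise-algebraic cells, so both $\overline{\Delta_{P,\sigma}}$ and $\partial \Delta_{P,\sigma}$ inherit regular finite CW structures and the standard machinery applies without modification. The remainder of the argument is essentially a dictionary between algebraic vanishing in the complex $\calS[\Delta_{P,\sigma}]$ and topological vanishing for the open set $\Delta_{P,\sigma}$.
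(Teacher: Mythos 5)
Your overall strategy is sound and lands on the same final step as the paper: reduce the vanishing of $H_1(\calS[\Delta_{P,\sigma}])$ to the vanishing of $H_1(\Delta_{P,\sigma};\R)$, then use that a connected open subset of $\R^2$ is contractible if and only if its first homology vanishes. But your route through the middle is genuinely different from the paper's, and the duality step --- which you yourself flag as the main obstacle and then dismiss --- is not correct as stated. The pair $(\overline{\Delta_{P,\sigma}},\partial\Delta_{P,\sigma})$ is in general \emph{not} a compact surface relative to its manifold boundary, so classical Lefschetz duality does not ``apply without modification.'' Concretely, in Example~\ref{E:TopologicalCondition} with $Q=\langle x,z\rangle$, the set $\partial\Delta_{Q,\sigma_1}$ contains the central horizontal edge and its endpoints, which lie in the \emph{interior} of $\overline{\Delta_{Q,\sigma_1}}=|\Delta|$; you are looking at a disk relative to its boundary circle union an interior arc, not a manifold-with-boundary pair. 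Moreover, the duality you want identifies $H_k(\overline{U},\overline{U}\smallsetminus U)$ with $\widetilde H_k(U^+)\cong H^{\,n-k}(U)$ (Borel--Moore, i.e.\ ordinary cohomology of $U$), not with $H^{\,n-k}_c(U)$; the open disk already distinguishes these in degrees $(k,n-k)=(2,0)$. With $\R$ coefficients and $U$ of finite homotopy type, $H^1(U;\R)$, $H^1_c(U;\R)$, and $H_1(U;\R)$ all vanish simultaneously, so your conclusion survives, but the justification must pass through the quotient $\overline{\Delta_{P,\sigma}}/\partial\Delta_{P,\sigma}\cong \Delta_{P,\sigma}^+$ rather than through manifold duality.

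That quotient is exactly how the paper proceeds, and it avoids duality entirely: by excision, $H_*(\overline{\Delta_{P,\sigma}},\partial\Delta_{P,\sigma};S)$ is the reduced homology of $X=\overline{\Delta_{P,\sigma}}/\partial\Delta_{P,\sigma}$; the paper identifies $X$ explicitly as a $2$-sphere with $c-1$ points identified ($c$ the number of components of $\partial\Delta_{P,\sigma}$) and $\Delta_{P,\sigma}$ with $X\smallsetminus x$, which deformation retracts to a wedge of $c-1$ circles generating both $H_1(X;S)$ and $H_1(\Delta_{P,\sigma};S)$. This gives the equivalence directly and makes the last step elementary (no appeal to free fundamental groups or the Riemann mapping theorem is needed, since $\Delta_{P,\sigma}$ is visibly homotopy equivalent to a finite wedge of circles). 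If you replace your Lefschetz step by this excision/quotient argument --- or by the correctly stated duality $H_1(\overline{U},\overline{U}\smallsetminus U;\R)\cong H^1(U;\R)\cong H_1(U;\R)^{*}$ for $U$ open of finite type in $\R^2$ --- your proof is complete.
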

\begin{proof}
From the observations above,
$H_*(\calS[\Delta_{P,\sigma}])\cong H_*(\overline{\Delta_{P,\sigma}},\partial\Delta_{P,\sigma};S)$ is the 
homology  with coefficients in $S$ of $\overline{\Delta_{P,\sigma}}$ relative to $\partial\Delta_{P,\sigma}$.
By Excision~\cite[Prop.~2.22]{Ha02}, this is isomorphic to the reduced homology of the
topological quotient $\defcolor{X}:=\overline{\Delta_{P,\sigma}}/\partial\Delta_{P,\sigma}$. 
	
Let $\defcolor{x}$ be the image of $\partial\Delta_{P,\sigma}$ in $X$.
Since $\Delta_{P,\sigma}$ is open and $\partial\Delta_{P,\sigma}$ is collapsed when $X$ is formed, $\Delta_{P,\sigma}$ is
homeomorphic to $X\smallsetminus x$.
	
Suppose $\partial\Delta_{P,\sigma}$ has $c$ components.
Then $X$ is homeomorphic to a $2$-sphere with $c-1$ points identified to the single point $x$.
Consequently, $X\smallsetminus x$ has a deformation retraction to a wedge of $c-1$ circles.
These circles are generators of both the 
homology $H_1(\Delta_{P,\sigma};S)$ and $H_1(X;S)$.
Since $H_1(X;S)\cong H_1(\calS[\Delta_{P,\sigma}])$, we see that $H_1(\calS[\Delta_{P,\sigma}])=0$ if and only if
$H_1(\Delta_{P,\sigma})=0$. 
Since $\Delta_{P,\sigma}$ is connected, it is contractible if and only if its first homology group vanishes, so we are done.
\end{proof}

\begin{remark}\label{rem:H0}
  By the proof of Lemma~\ref{L:Contractible}, $H_0(\calS[\partial\Delta_{P,\sigma}])=\widetilde{H}_0(X;S)$, the reduced
  homology of $X$ with coefficients in $S$. 
  Since $X$ is connected, this homology module vanishes.
\end{remark}

\begin{definition}\label{D:generic}
   A mesh $\Delta$ is \demph{generic} if it satisfies the following conditions.
\begin{enumerate}
  \item For every pair $\tau,\tau'$ of edges meeting at a vertex $v$, either $G_\tau=G_{\tau'}$ or the
    tangent lines of $\tau$ and $\tau'$ at $v$ are distinct. 
 \item For every vertex $v$, the radical of the ideal $J(v)$ is the ideal $I(v)$ of $v$.
 \item For every prime ideal $P\subset S$ and face $\sigma$ of $\Delta$, $\Delta_{P,\sigma}$ is contractible.
 \item For every face $\sigma$ of $\Delta$, the edge forms $\{G_\tau\mid \tau\subset\sigma\}$ are distinct.
\end{enumerate}
\end{definition}

 Our results do not require all the conditions of Definition~\ref{D:generic} (indeed Condition (4) is only required in
 Section~\ref{S:regularity}).
 We point out which conditions are necessary for each result.

\begin{example}\label{E:GenericityAssumptions}
  Let $\Delta$ be the mesh from Figures~\ref{F:cell_complex} and~\ref{F:Topological_Condition}.
  This satisfies Condition (1) of Definition~\ref{D:generic}.
  It fails Condition (4) since either of the faces $\sigma_2$ and $\sigma_8$ (see Figure~\ref{F:Topological_Condition})
  have two different edges lying on the same curve.  

  It also fails Condition (2).
  Indeed, let $v$ be the interior vertex $(-1,0)$ of $\Delta$.
  Then $J(v)=\langle (yz+x^2-z^2)^{r+1},(yz-x^2+z^2)^{r+1},y^{r+1}\rangle$.
  Its radical is $I=\langle y,x-z\rangle\cap \langle y,x+z\rangle$, which is supported at both interior vertices of
  $\Delta$.

  For (3), we saw in Example~\ref{E:TopologicalCondition} that if $Q=\langle x,z\rangle$ then $\Delta_{Q,\sigma_1}$ has the
  topological type of an open annulus so is not contractible. 
\end{example}

\begin{proposition}\label{prop:FiniteLength}
If $\Delta$ is generic, then $H_1(\calS/\calJ)\cong H_0(\calJ)$ has finite length.
\end{proposition}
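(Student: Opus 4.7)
The plan is to show that $H_1(\calS/\calJ)$ is supported only at the irrelevant maximal ideal $\frakm=\langle x,y,z\rangle$; combined with finite generation of $H_1(\calS/\calJ)$ over $S$, this gives finite length. Equivalently, I would verify that $(H_1(\calS/\calJ))_P=0$ for every non-maximal prime $P\subset S$.

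I would first apply Proposition~\ref{prop:ChainLocalization} to decompose
\[
 (\calS/\calJ[\Delta])_P\ \cong\ \bigoplus_{\sigma\in\Sigma_P}(\calS/\calJ[\Delta_{P,\sigma}])_P\,,
\]
reducing the problem, by exactness of localization, to showing $H_1(\calS/\calJ[\Delta_{P,\sigma}])=0$ for each class $\sigma\in\Sigma_P$. I would then feed the restricted short exact sequence~\eqref{E:JSSmodJ} into the long exact sequence of homology. Condition~(3) of Definition~\ref{D:generic} together with Lemma~\ref{L:Contractible} forces $H_1(\calS[\Delta_{P,\sigma}])=0$, and Remark~\ref{rem:H0} forces $H_0(\calS[\Delta_{P,\sigma}])=0$. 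These two vanishings collapse the long exact sequence to an isomorphism $H_1(\calS/\calJ[\Delta_{P,\sigma}])\cong H_0(\calJ[\Delta_{P,\sigma}])$, reducing the task to showing $H_0(\calJ[\Delta_{P,\sigma}])=0$.

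Next I would run a brief case analysis driven by Condition~(2). Suppose $v\in\Delta_{P,\sigma}$ is an interior vertex; then every interior edge $\tau$ through $v$ has $G_\tau\in P$, so $J(v)\subset P$, and Condition~(2) gives $I(v)=\sqrt{J(v)}\subset P$. Because $I(v)$ has height two while $P$ is non-maximal, this forces $P=I(v)$. Hence, unless $P$ is the defining ideal of some interior vertex of $\Delta$, the subset $\Delta_{P,\sigma}$ contains no interior vertex, the degree-zero module of $\calJ[\Delta_{P,\sigma}]$ is zero, and $H_0(\calJ[\Delta_{P,\sigma}])=0$ for free.

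In the remaining case $P=I(v_0)$ for an interior vertex $v_0$, this $v_0$ is the unique interior vertex of $\Delta$ whose defining ideal equals $P$, and so the only possible element of $\Delta_{P,\sigma}^\circ$. Invoking hereditariness of $\Delta$, I would verify that whenever $v_0\in\Delta_{P,\sigma}$, every interior edge through $v_0$ belongs to $\Delta_{P,\sigma}$: each such edge has its defining form in $P=I(v_0)$, and hereditariness implies that the two faces it bounds are $P$-adjacent to any face of $[\sigma]_P$ containing $v_0$. Consequently, the image of $\partial_1\colon\bigoplus_{\tau\in\Delta_{P,\sigma}}J(\tau)\to J(v_0)$ contains every single-edge summand $J(\tau)$ with $\tau\ni v_0$, so surjectivity of $\partial_1$ follows at once from the defining equation $J(v_0)=\sum_{\tau\ni v_0}J(\tau)$. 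The main obstacle is the combinatorial bookkeeping identifying which cells actually lie in $\Delta_{P,\sigma}$; once this is settled, the argument uses only Conditions~(2) and~(3) of Definition~\ref{D:generic} (neither (1) nor (4) is needed).
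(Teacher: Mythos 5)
Your proposal is correct and follows essentially the same route as the paper: localize via Proposition~\ref{prop:ChainLocalization}, use the long exact sequence of~\eqref{E:JSSmodJ} together with Condition~(3) and Lemma~\ref{L:Contractible} (and Remark~\ref{rem:H0}) to reduce to $H_0(\calJ[\Delta_{P,\sigma}])=0$, use Condition~(2) to force $P=I(v)$ when an interior vertex is present, and conclude by surjectivity of $\partial_1$ onto $J(v)$. The only differences are cosmetic --- you apply the isomorphism $H_1(\calS/\calJ[\Delta_{P,\sigma}])\cong H_0(\calJ[\Delta_{P,\sigma}])$ uniformly where the paper splits into three cases, and you are slightly more explicit (via hereditariness) about why every edge through $v$ lies in $\Delta_{P,\sigma}$.
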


If the mesh is not generic, then $H_0(J)$ need not be finite length, as Example~\ref{E:FirstMesh} illustrates.

\begin{proof}
  It is enough to show that $H_1(\calS/\calJ)_P=0$ for every prime ideal $P$ which is not the graded maximal ideal of $S$.
  We use that localization commutes with taking homology, so we can first localize the chain complex $\calS/\calJ$ and then
  take homology afterward.
  By Proposition~\ref{prop:ChainLocalization}, it is enough to show that $H_1(\calS/\calJ[\Delta_{P,\sigma}])=0$ for
  every prime ideal $P$ and face $\sigma$.
  We show this is implied by the contractability of $\Delta_{P,\sigma}$.

  Suppose $\Delta_{P,\sigma}$ consists of the interior of an isolated face.
  Then $\calS[\Delta_{P,\sigma}]_1=0$, so certainly $H_1(\calS/\calJ[\Delta_{P,\sigma}])=0$.

  Now suppose that $\Delta_{P,\sigma}$ does not contain any interior vertices of $\Delta$.
  Then, in the short exact sequence of complexes~\eqref{E:JSSmodJ},
  $\calJ[\Delta_{P,\sigma}]$ has length one and the other complexes have length two.
  The tail end of the long exact sequence in homology gives the surjection
\[
    H_1(\calS[\Delta_{P,\sigma}])\ \twoheadrightarrow\  H_1(\calS/\calJ[\Delta_{P,\sigma}])\,.
\]
  Since $\Delta$ satisfies Condition (3) of Definition~\ref{D:generic}, $\Delta_{P,\sigma}$ is contractible.  By Lemma~\ref{L:Contractible}, $H_1(\calS[\Delta_{P,\sigma}])=0$.  Hence $H_1(\calS/\calJ[\Delta_{P,\sigma}])=0$ as well, and we are done.

  Now suppose that $\Delta_{P,\sigma}$ contains an interior vertex $v$ of $\Delta$. 
  Since $P$ is not the maximal ideal and $\Delta$ satisfies Condition (2) of Definition~\ref{D:generic}, we must have $P=I(v)$, the
  ideal of the vertex $v$.  Hence $v$ is the only interior vertex of $\Delta_{P,\sigma}$.
  Invoking Condition (3) of Definition~\ref{D:generic} and Lemma~\ref{L:Contractible}, we again have $H_1(\calS[\Delta_{P,\sigma}])=0$.
  Furthermore, $H_0(\calS[\Delta_{P,\sigma}])=0$ by Remark~\ref{rem:H0}.
  
  Thus from the long exact sequence in homology induced by~\eqref{E:JSSmodJ} we have
\[
   H_1(\calS/\calJ[\Delta_{P,\sigma}])\ \cong\  H_0(\calJ[\Delta_{P,\sigma}])\,.
\]
  As there is only one vertex $v\in\Delta_{P,\sigma}$, we have
\[
  \calJ[\Delta_{P,\sigma}]\ =\ \bigoplus\limits_{\tau\in\Delta_{P,\sigma}} J(\tau)
  \ \xrightarrow{\ \partial_1\ }\  J(v)\,.
\]
 By definition, the map $\partial_1$ is surjective, since $J(v)=\sum_{v\in\tau} J(\tau)$, and if $v\in\Delta_{P,\sigma}$
 then so is any edge $\tau$ which contains $v$.
 It follows that $H_0(\calJ[\Delta_{P,\sigma}])=0$, and we are done.
\end{proof}

\begin{remark}
   Proposition~\ref{prop:FiniteLength} requires only Conditions (2) and (3) of Definition~\ref{D:generic}.\hfill$\diamond$
\end{remark}

\begin{remark}
   For rectilinear meshes, Schenck and Stillman show that $C^r(\Delta)$ is a free $S$-module if and only if
   $H_1(\calS/\calJ)=0$~\cite[Thm.~4.1]{ShSt97b}.
   This does not hold for semialgebraic splines.
   Even the spline module over a mesh with a single interior vertex is not necessarily free---this is why the local
   analysis for semialgebraic splines in~\cite{DiPSS} is more complicated than the local analysis in the rectilinear
   case. \hfill$\diamond$
\end{remark}

\begin{theorem}\label{T:GenericDimensionFormula}
  Suppose $\Delta$ is a generic mesh with $\phi_2$ faces and $\phi_1$ interior edges.
  For each interior vertex $v$, let $t_v$ be the minimum of $r{+}2$ and the number of arcs meeting at $v$ and put
  $a_v=\lfloor (r{+}1)/(t_v{-}1)\rfloor$.
  Then, for $d\gg 0$,
  \begin{equation}\label{E:GenericDimensionFormula}
   \dim C^r_d(\Delta)\ =\ 
   (\phi_2-\phi_1)\tbinom{d+2}{2} + \sum_{\tau\in\Delta^\circ_1} \tbinom{d-(r+1)n_\tau+2}{2}
   + \sum_{v\in\Delta^\circ_0}\left( \tbinom{r+a_v +2}{2}-t_v\tbinom{a_v+1}{2}\right)\,.
 \end{equation}
\end{theorem}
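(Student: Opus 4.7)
The plan is to apply Corollary~\ref{C:DimFormula} and eliminate its two unknown contributions: the global correction $\dim H_0(\calJ)_d$ and the local vertex Hilbert functions $\dim(S/J(v))_d$. By Corollary~\ref{C:DimFormula},
\[
\dim C^r_d(\Delta) \,=\, (\phi_2-\phi_1)\tbinom{d+2}{2} + \sum_{\tau\in\Delta^\circ_1}\tbinom{d-(r+1)n_\tau+2}{2} + \sum_{v\in\Delta^\circ_0}\dim(S/J(v))_d + \dim H_0(\calJ)_d.
\]
For the global term, Proposition~\ref{prop:FiniteLength} already shows that, under the generic hypothesis, $H_0(\calJ)\cong H_1(\calS/\calJ)$ has finite length, so $\dim H_0(\calJ)_d=0$ for all $d\gg 0$.

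For each interior vertex $v$, Condition~(2) of Definition~\ref{D:generic} forces $J(v)$ to be primary to the ideal $I(v)$ of $v$; consequently $\dim(S/J(v))_d$ stabilizes to the multiplicity of $J(v)$, which is computable in the local ring at $v$. Localizing and dehomogenizing at $v$ reduces the problem to the colength of $\langle g_\tau^{r+1}:\tau\ni v\rangle$ in the two-dimensional regular local ring $R$ at $v$: each $g_\tau$ vanishes at $v$ with nonzero linear part $\ell_\tau$, and Condition~(1) guarantees that the $\ell_\tau$ for distinct arcs at $v$ are non-proportional.

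I would then argue that replacing each $g_\tau$ by its linear part $\ell_\tau$ leaves this colength unchanged. The $\frakm$-adic leading form of $g_\tau^{r+1}$ is $\ell_\tau^{r+1}$, and a flat degeneration via the Rees algebra of the $\frakm$-adic filtration should give $\dim_\R R/\langle g_\tau^{r+1}\rangle=\dim_\R R/\langle \ell_\tau^{r+1}\rangle$. This reduces the computation to the rectilinear case of lines through a point with distinct slopes, where Schumaker's classical local vertex formula yields the stated value $\tbinom{r+a_v+2}{2}-t_v\tbinom{a_v+1}{2}$. The main obstacle is the equality of colengths in the degeneration: upper semicontinuity of Hilbert functions supplies only the inequality $\dim_\R R/\langle g_\tau^{r+1}\rangle\le\dim_\R R/\langle \ell_\tau^{r+1}\rangle$, and establishing equality requires showing that $\{g_\tau^{r+1}\}$ forms a standard basis of its ideal with respect to the $\frakm$-adic filtration---here Condition~(1) on distinct tangent directions should be the essential input, ensuring that no syzygy among the $g_\tau^{r+1}$ produces a leading form not already present in $\langle \ell_\tau^{r+1}\rangle$.
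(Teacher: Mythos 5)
Your proposal follows the paper's proof: apply Corollary~\ref{C:DimFormula}, use Proposition~\ref{prop:FiniteLength} to conclude $\dim H_0(\calJ)_d=0$ for $d\gg 0$, and reduce each local vertex contribution $\dim(S/J(v))_d$ to the multiplicity of the ideal generated by the $(r{+}1)$st powers of the linear initial forms of the edge forms at $v$, which equals $\tbinom{r+a_v+2}{2}-t_v\tbinom{a_v+1}{2}$ by \cite[Cor.~3.4]{DiPSS}. The one step you flag as the main obstacle---that the $g_\tau^{r+1}$ form a standard basis for the $\frakm$-adic filtration, so the colength is preserved when passing to leading forms---is exactly the content of \cite[Thm.~4.1]{DiPSS} (relying on Conditions~(1) and~(2) of Definition~\ref{D:generic}), which the paper cites rather than reproves; your degeneration sketch is the right idea for that cited result, so nothing further is needed.
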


\begin{proof}
  The formula is obtained from~\eqref{Eq:dimFormula} by substituting the appropriate expressions for $\dim J(v)_d$ and
  $\dim H_0(\calJ)_d$.
  By Proposition~\ref{prop:FiniteLength}, $\dim H_0(\calJ)_d=0$ for $d\gg 0$, so this term drops out.
  It follows from~\cite[Thm.~4.1]{DiPSS} that the multiplicity of the scheme defined by $S/J(v)$ (using Conditions (1)
  and (2) of Definition~\ref{D:generic}) is the same as the multiplicity of the scheme defined by $S/I(v)$, where $I(v)$ is
  the ideal defined by $(r{+}1)$st powers of the (linear) initial forms of the edge forms meeting at $v$.
  The multiplicity of this scheme is $\tbinom{r+a_v +2}{2}-t_v\tbinom{a_v+1}{2}$ (see~\cite[Cor.~3.4]{DiPSS}).
  Thus the formula is proved.
\end{proof}

\begin{remark}
   Theorem~\ref{T:GenericDimensionFormula} requires Conditions (1), (2), and (3) of
   Definition~\ref{D:generic}.\hfill$\diamond$ 
\end{remark}

\begin{example}\label{E:Comparison}
Figure~\ref{F:ScSt_distinct} shows a the mesh $\Delta$ obtained by altering the mesh of Figure~\ref{F:cell_complex} so
that it satisfies Definition~\ref{D:generic} as follows.
Keep the central edge defined by $y=0$.
Let the top left edge be the line segment between $(-\frac{3}{2},1)$ and $(-1,0)$ with edge form $2x+y+2z$, and the edge to 
its right be the segment of the parabola $yz+(x+z)(x+2z)$ between $(-1,0)$ and $(0,1)$.
\begin{figure}[htb]
  \centering
   \includegraphics{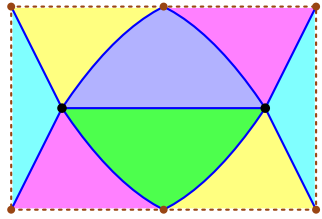}
   \caption{Altered mesh from Example~\ref{E:GenericityAssumptions}.}
   \label{F:ScSt_distinct}
\end{figure}
The other six non-horizontal edges are obtained from these by symmetry.
The first twelve columns of Table~\ref{Tbl:Comparison} record the Hilbert functions and the last the Hilbert polynomials of
$C^r(\Delta)$, computed using~\eqref{Eq:dimFormula}.
\begin{table}[htb]
\caption{Comparison of Hilbert function and polynomial for Example~\ref{E:Comparison}}\label{Tbl:Comparison}
\renewcommand{\arraystretch}{1.2}
\begin{tabular}{|c|cccccccccccc|c|}
\hline
$d$ & 0 & 1 & 2 & 3 & 4 & 5 & 6 & 7 & 8 & 9 & 10 & 11 & $\HP(C^r(\Delta),d)$ \\
\hline
$\dim C^0_d(\Delta)$ & 1 & \defcolor{{\bf 4}} & 15 & 34 & 61 & 96 & 139 & 190 & 249 & 316 & 391 & 474 & $4d^2-d+1$\\
\hline
$\dim C^1_d(\Delta)$  & 1 & 3 & \underline{6} & \defcolor{{\bf 11}} & 25 & 47 & 77 & 115 & 161 & 215 & 277 & 347 & $4d^2-14d+17$ \\
\hline
$\dim C^2_d(\Delta)$  & 1 & 3 & 6 & 10 & 15 & \underline{\defcolor{{\bf 23}}} & 38 & 63 & 96 & 137 & 186 & 243 & $4d^2-27d+56$ \\
\hline
$\dim C^3_d(\Delta)$  & 1 & 3 & 6 & 10 & 15 & 21 & 28 & \underline{\defcolor{{\bf 38}}} & 54 & 82 & 118 & 162 & $4d^2-40d+118$ \\
\hline
$\dim C^4_d(\Delta)$  & 1 & 3 & 6 & 10 & 15 & 21 & 28 & 36 & 45 & \defcolor{{\bf 58}} & \underline{77} & 106 & $4d^2-53d+205$ \\
\hline
\end{tabular}

\end{table}
As in Table~\ref{T:HilbertFunction}, the values at the postulation numbers are underlined and the first space having a 
nontrivial spline is \defcolor{{\bf highlighted}}.
(In the first row, $\dim C^0_d(\Delta)=\HP(C^0(\Delta),d)$ for every $d\ge 0$.)\hfill$\diamond$
\end{example}

The dimension of $C^r_d(\Delta)$ could be smaller or larger than the formula~\eqref{E:GenericDimensionFormula}
of Theorem~\ref{T:GenericDimensionFormula}, as is evident in Table~\ref{Tbl:Comparison}.
It is thus important to know how large $d$ should be to guarantee the exactness of this formula.
This is the content of Section~\ref{S:regularity}.

\section{Upper bound on regularity}\label{S:regularity}
We establish a bound on how large $d$ must be in order for the formula~\eqref{E:GenericDimensionFormula} of
Theorem~\ref{T:GenericDimensionFormula} to give the dimension of $C^r_d(\Delta)$ for a generic semialgebraic mesh. 
The closure of the union of the two faces adjacent to an interior edge $\tau$ is its \demph{star}, denoted
\defcolor{$\st(\tau)$}.

\begin{theorem}\label{T:GenericDimensionFormulaWithBound}
  Suppose that $\Delta$ is a generic mesh.
  Then the formula~\eqref{E:GenericDimensionFormula} for $\dim C^r_d(\Delta)$ holds for $d\ge D-2$, where
  $D=\max_{\tau\in\Delta^\circ_1}\{D_\tau\}$ and
\[
   D_\tau\ :=\ (r+1)\cdot\sum_{\tau'\in(\mbox{\footnotesize{\textup{st}}}(\tau))_1\cap\Delta^\circ_1} \deg(G_{\tau'})\,.
\]
\end{theorem}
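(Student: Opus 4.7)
The plan is to apply the dimension formula of Corollary~\ref{C:DimFormula} and show that, for $d \geq D - 2$, both (a) $\dim(S/J(v))_d$ equals its asymptotic multiplicity $\tbinom{r+a_v+2}{2}-t_v\tbinom{a_v+1}{2}$ for every interior vertex $v$, and (b) $\dim H_0(\calJ)_d = 0$. Once both hold, the formula~\eqref{Eq:dimFormula} collapses directly to~\eqref{E:GenericDimensionFormula}, so the task reduces to a pair of Castelnuovo--Mumford regularity estimates.

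For (a), I would invoke Conditions (1) and (2) of Definition~\ref{D:generic} together with Theorem~4.1 of~\cite{DiPSS}, which identifies the Hilbert function of $S/J(v)$ with that of the linearization $S/I(v)$, where $I(v)$ is generated by $(r+1)$-st powers of the tangent lines at $v$. A standard regularity bound for ideals of powers of linear forms then shows that $\dim(S/J(v))_d$ attains its asymptotic value once $d$ exceeds the sum $(r+1)\sum_{\tau\ni v} n_\tau$ of the degrees of the generators of $J(v)$. Since every edge $\tau\ni v$ belongs to $\st(\tau_0)$ for any $\tau_0 \ni v$, this quantity is at most $D_{\tau_0} \leq D$, and a more careful accounting yields the bound $D - 2$.

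The main obstacle is (b). By Proposition~\ref{prop:FiniteLength}, $H_0(\calJ)$ has finite length and is supported only at the graded maximal ideal $\frakm = \langle x,y,z\rangle$, so its vanishing in degree $d$ is equivalent to $\reg(H_0(\calJ)) < d$. Following the strategy of~\cite{DiP_Mixed}, for each interior edge $\tau$ I would form the local subcomplex $\calJ[\st(\tau)]$, whose terms are indexed by the interior edges and vertices lying in $\st(\tau)$. Condition (3) of Definition~\ref{D:generic} ensures each local piece is topologically well-behaved, and a Mayer--Vietoris argument pieces the local cokernels together to show that the top nonzero degree of $H_0(\calJ)$ is bounded by $\max_\tau\{\text{top degree of }H_0(\calJ[\st(\tau)])\}$. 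Each local cokernel is presented by the ideals $J(\tau')$ whose generators $G_{\tau'}^{r+1}$ for $\tau'\in\st(\tau)_1\cap\Delta^\circ_1$ have total degree $D_\tau$, and a Koszul-type regularity bound yields $\reg(H_0(\calJ[\st(\tau)])) \leq D_\tau - 3$, giving $H_0(\calJ)_d = 0$ for $d \geq D - 2$.

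The hardest step is establishing this local Koszul-type bound and verifying that the ``$-3$'' emerges correctly from working in a three-variable polynomial ring together with the codimension of the local support. Once this constant is pinned down, taking the maximum over all interior edges $\tau$ and combining with the bound from (a) yields the global threshold $D - 2$ claimed by the theorem.
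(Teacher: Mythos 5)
Your opening reduction is sound: by Corollary~\ref{C:DimFormula}, the formula~\eqref{E:GenericDimensionFormula} holds in degree $d$ once $\dim(S/J(v))_d$ has stabilized at its multiplicity for every interior vertex and $H_0(\calJ)_d=0$. But this is a genuinely different route from the paper, which never bounds these two terms separately; instead it bounds $\reg(C^r(\Delta))$ directly, by passing to the submodule $LS^r(\Delta)$ of locally supported splines, decomposing that over stars of edges and faces, and exhibiting for each star the three explicit splines $F_1,F_2,F_\tau$ whose single syzygy in degree $D_\tau$ pins the regularity at $D_\tau-1$. Your route would have to supply substitutes for both of your estimates, and as written each has a real gap. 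For (a): the claim that every edge $\tau\ni v$ lies in $\st(\tau_0)$ for any $\tau_0\ni v$ is false as soon as more than three edges meet at $v$ --- $\st(\tau_0)$ is the closure of only the two faces adjacent to $\tau_0$, so among the edges at $v$ it contains only $\tau_0$ and its two cyclic neighbors. Moreover, \cite[Thm.~4.1]{DiPSS} identifies the \emph{multiplicity} of $S/J(v)$ with that of the linearization $S/I(v)$; it does not identify Hilbert functions, so a regularity bound for powers of linear forms does not transfer to $S/J(v)$. You would need a separate argument (e.g., comparing $J(v)$ with a complete intersection of two of its generators it contains) to control $\reg(S/J(v))$, and then check the resulting constant against $D-2$.

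The more serious gap is (b). There is no Mayer--Vietoris decomposition of $H_0(\calJ)$ over stars of edges: by Proposition~\ref{prop:FiniteLength} this module is finite length, i.e., supported only at the graded maximal ideal, and the local complexes $\calJ[\Delta_{P,\sigma}]$ (and the localization statement of Proposition~\ref{prop:ChainLocalization}) see only the behavior at non-maximal primes, where $H_0(\calJ)$ already vanishes. The chain complex from \cite[Sect.~5]{DiP_Mixed} that the paper imports decomposes $LS^r(\Delta)$, a submodule of the spline module, not the cokernel $H_0(\calJ)$; the degree in which $H_0(\calJ)$ dies is governed by the syzygies among the forms $G_\tau^{r+1}$ at each vertex (via the Schenck--Stillman presentation of Section~6), and your asserted ``Koszul-type bound $\reg(H_0(\calJ[\st(\tau)]))\le D_\tau-3$'' is precisely the hard content, stated without proof and without a correct object to apply it to. Unless you can produce that bound by some other means, the argument does not close; the paper's detour through $\reg(C^r(\Delta))$ and the explicit generators $F_1,F_2,F_\tau$ exists exactly to avoid having to control $H_0(\calJ)$ and the $S/J(v)$ term by term.
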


\begin{example}
  Let $\Delta$ be the generic mesh of Example~\ref{E:Comparison}.
  The largest value for $D_\tau$ is $9(r{+}1)$ which occurs for the central edge $\tau$ connecting the interior vertices
  $(-1,0)$ and $(1,0)$.
  Thus the formulas $P^r(d)$ for $\dim C^r_d(\Delta)$ in Example~\ref{E:Comparison} hold for $d\ge 9(r{+}1){-}2$.
  From Table~\ref{Tbl:Comparison} the formula holds for much lower values of $d$.
  Thus the bound from Theorem~\ref{T:GenericDimensionFormulaWithBound} is typically larger than the actual postulation
  number. \hfill$\diamond$
\end{example}

Theorem~\ref{T:GenericDimensionFormulaWithBound} follows from arguments similar to those from~\cite{DiP_Mixed}.
We sketch this.
The postulation number of a graded $S$-module $M$ is bounded above in terms of
its Castelnuovo-Mumford \demph{regularity} \defcolor{$\reg(M)$}.
For the arguments given, we need only that the postulation number of $C^r(\Delta)$ is bounded by
$\reg(C^r(\Delta)){-}2$ (see~\cite[Lem.~4.6]{DiP_Mixed} and Remark~\ref{rem:pdim}).

Sections~4 and 5 of~\cite{DiP_Mixed} show if $\Delta$ is a rectilinear mesh then the regularity of $C^r(\Delta)$ is bounded
by the regularity of its submodules supported on the star of an edge.
The same arguments may be used to establish this result for semi-algebraic splines.
Theorem~\ref{T:GenericDimensionFormulaWithBound} follows by showing that the regularity of the submodule of $C^r(\Delta)$
supported on $\mbox{st}(\tau)$ is at most
$\left(\sum_{\tau'\in\Delta^\circ_1\cap\mbox{st}(\tau)} \deg(G_{\tau'})(r+1)\right){-}1$.
We give more details in the following subsections.
They only require that $\Delta$ satisfies Condition~(4) in Definition~\ref{D:generic}.

\begin{remark}\label{rem:pdim}
  Most results of~\cite{DiP_Mixed} (in particular~\cite[Lem.~4.6]{DiP_Mixed}) use that $C^r(\Delta)$ has
  {\sl projective dimension} at most one.
 Altering the map~\eqref{Eq:spline_kernel} in Lemma~\ref{L:SplineModule} shows that a spline module $C^r(\Delta)$
 for a semialgebraic mesh $\Delta$ is the kernel of a map between {\sl free} $S$-modules.
 This implies that the projective dimension of $C^r(\Delta)$ is at most one.
 Observe that $C^r(\Delta)$ is the kernel of the graded map
 \[
   S^{\Delta_2}\oplus \bigoplus\limits_{\tau\in\Delta^\circ_1} S(-(r+1)\deg(G_\tau))
   \ \longrightarrow\  S^{\Delta^\circ_1}\,,
 \]
 which is the map $\partial$ of~\eqref{Eq:spline_kernel} on the first summand and multiplication by
 $G_\tau^{r+1}$ on the summand corresponding to $\tau\in\Delta^\circ_1$.
 This is described in~\cite{BiRo91}; the only difference for semialgebraic meshes is to replace the linear forms by edge
 forms. 
\end{remark}

\subsection{First reduction}
For a face $\sigma$, write \defcolor{$C^r_\sigma(\Delta)$} for the submodule of splines of $C^r(\Delta)$ which are
supported on $\sigma$.
Likewise, for an interior edge $\tau$ write \defcolor{$\lsone{r}{\tau}{\Delta}$} for the submodule of splines of
$C^r(\Delta)$ which are supported on the star of $\tau$.
Define 
\[ 
  \defcolor{LS^{r}(\Delta)}\ :=\ \sum_{\tau\in\Delta^\circ_1} \lsone{r}{\tau}{\Delta}\,.
\]
We require Condition~(4) of Definition~\ref{D:generic} to guarantee that if $P$ is a prime of codimension $1$ (so that
$P=\langle f\rangle$ for some polynomial $f$), then $\overline{\Delta_{P,\sigma}}$ is either
$\sigma$ or, if $P=\langle G_\tau\rangle$ for some $\tau\subset\sigma$, then $\Delta_{P,\sigma}=\mbox{st}(\tau)$.
Hence $LS^{r}(\Delta)$ is generated by splines which are supported on
sets of the form $\Delta_{P,\sigma}$ where $\codim(P)=1$. 

\begin{lemma}\label{L:locsup}
  Let $K$ be the cokernel of the inclusion of $LS^{r}(\Delta)$ into $C^r(\Delta)$.
  Then $K$ is supported at primes of codimension at least two.
\end{lemma}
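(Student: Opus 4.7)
The goal is to show $K_P = 0$ for every prime $P \subset S$ of codimension at most one. The codimension-zero case (i.e.\ $P = \langle 0\rangle$) reduces to showing $LS^r(\Delta)$ and $C^r(\Delta)$ have the same generic rank, which will fall out of the constructions below, so the substantive work is at codimension one. Such a prime is principal: $P = \langle f\rangle$ for some irreducible form $f \in S$. Since each edge form $G_\tau$ is irreducible, $G_\tau \in P$ if and only if $G_\tau$ is a scalar multiple of $f$. Consequently any edge form distinct from $f$ becomes a unit in $S_P$, and this is the algebraic fact that drives the argument.

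By Proposition~\ref{prop:ChainLocalization}, the localized complex $(\calS/\calJ[\Delta])_P$ decomposes as a direct sum over $\sim_P$-equivalence classes of faces. Condition~(4) of Definition~\ref{D:generic} forces the edge forms of distinct edges of any single face to be distinct, so each face has at most one interior edge whose form is $f$. Hence every $\sim_P$-class is either a singleton $\{\sigma\}$ or a pair $\{\sigma,\sigma'\}$ sharing an interior edge $\tau$ with $G_\tau = f$. Taking $H_2$ of the corresponding localized subcomplex identifies the summands of $(C^r(\Delta))_P$ as $S_P$ in the singleton case and as $C^r(\st(\tau))_P$ (the kernel of $S^2\to S/J(\tau)$) in the pair case; any vertex contributions from $\Delta_{P,\sigma}$ sit in degree zero and do not affect $H_2$.

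I will then construct an explicit preimage in $LS^r(\Delta)_P$ for each summand. For a pair class $\{\sigma,\sigma'\}$ with shared edge $\tau$, $G_\tau = f$, and a spline $(G,G') \in C^r(\st(\tau))$, set
\[
  m \;:=\; \prod_{\tau'\in\st(\tau)_1\cap\Delta^\circ_1,\;\tau'\neq\tau} G_{\tau'}^{r+1}
\]
and define $\tilde F\in\lsone{r}{\tau}{\Delta}$ by $\tilde F_\sigma = mG$, $\tilde F_{\sigma'} = mG'$, and $\tilde F_\rho = 0$ elsewhere. The condition $G_\tau^{r+1}\mid(G-G')$ makes $\tilde F$ a valid spline, and $m$ is a unit in $S_P$, so $\tilde F$ projects onto $(G,G')$ in the pair summand. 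For a singleton $\{\sigma\}$, I pick any interior edge $\tau$ of $\sigma$ (necessarily with $G_\tau\neq f$), let $\sigma''$ be the opposite face of $\st(\tau)$, and set $\tilde F_\sigma = G_\tau^{r+1} m H$, $\tilde F_{\sigma''} = 0$, zero elsewhere. The extra factor $G_\tau^{r+1}$ is forced by continuity across $\tau$ against the zero neighbor; since $G_\tau$ and $m$ are units in $S_P$, the $\sigma$-coordinate of $\tilde F$ surjects onto $S_P$.

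The main obstacle is the singleton case, because the chosen opposite face $\sigma''$ typically lives in a different $\sim_P$-class and its summand must not be disturbed. Forcing $\tilde F_{\sigma''}=0$ is what introduces the extra factor of $G_\tau^{r+1}$, and the whole construction hinges on the fact that after localization at $P$ this factor is a unit. With the two constructions in place, $LS^r(\Delta)_P$ surjects onto every summand of $(C^r(\Delta))_P$, so $K_P = 0$ and the lemma follows.
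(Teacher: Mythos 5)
Your overall strategy is exactly the one the paper intends: the paper's proof is a two-line sketch saying that localization at primes of codimension at most one turns the inclusion into an isomorphism, to be ``verified using Lemma~\ref{L:SplineModule},'' and your write-up supplies the missing verification along precisely those lines (decompose $(C^r(\Delta))_P$ via Proposition~\ref{prop:ChainLocalization}, use Condition~(4) to see each $\sim_P$-class is a singleton or a pair sharing one edge with form $f$, then exhibit preimages in $LS^r(\Delta)_P$). The pair-class construction is correct: Condition~(4) guarantees that no interior edge of $\sigma$ or $\sigma'$ other than $\tau$ has edge form $f$, so your $m$ really is a unit in $S_P$ and $\tilde F$ hits $(G,G')$ up to a unit while vanishing in every other summand.

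There is, however, a concrete slip in the singleton case. You reuse $m=\prod_{\tau'\in\st(\tau)_1\cap\Delta^\circ_1,\,\tau'\neq\tau}G_{\tau'}^{r+1}$, and this product runs over the interior edges of \emph{both} faces of $\st(\tau)$, including those of the opposite face $\sigma''$. Nothing prevents $\sigma''$ from having an interior edge $\rho$ with $G_\rho=f$ (indeed $\sigma''$ will then lie in a pair class), in which case $m\in P$ and the $\sigma$-coordinate $G_\tau^{r+1}mH$ only sweeps out $f^{r+1}S_P$, not $S_P$; choosing a different edge $\tau$ of $\sigma$ need not help, since every neighbor of $\sigma$ could carry an $f$-edge. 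The fix is immediate: for a singleton class use the spline $\bigl(\prod_{\tau'\subset\sigma,\;\tau'\in\Delta^\circ_1}G_{\tau'}^{r+1}\bigr)H\cdot e_\sigma$, supported on $\sigma$ alone (this is the generator $F_1$ appearing later in Section~\ref{S:regularity}); it lies in $\lsone{r}{\tau}{\Delta}$ for any interior edge $\tau$ of $\sigma$, and since $\{\sigma\}$ is a singleton class every interior edge form of $\sigma$ avoids $P$, so the coefficient is a unit in $S_P$. With that replacement your argument is complete and correct.
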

\begin{proof}
The point is that upon localization at primes of codimension $\le 2$, the inclusion of $LS^{r}(\Delta)$ into
$C^r(\Delta)$ becomes an isomorphism.
This is verified using Lemma~\ref{L:SplineModule}.
\end{proof}

\begin{theorem}\label{T:FirstReduction}
If $\Delta$ is a generic mesh, then $\reg(C^r(\Delta))\le \reg(LS^{r}(\Delta))$.
\end{theorem}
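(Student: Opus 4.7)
The plan is to analyze the short exact sequence
\[
 0\ \longrightarrow\ LS^{r}(\Delta)\ \longrightarrow\ C^r(\Delta)\ \longrightarrow\ K\ \longrightarrow\ 0
\]
via local cohomology at the graded maximal ideal $\frakm=\langle x,y,z\rangle$ of $S$, using the standard characterization
\[
 \reg(M)\ =\ \max_i\bigl\{\,i+\mathrm{end}\bigl(H^i_\frakm(M)\bigr)\,\bigr\}\,,
\]
where $\mathrm{end}(N)$ denotes the largest integer $j$ with $N_j\neq 0$.

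Two inputs will drive everything. First, by Lemma~\ref{L:locsup} the cokernel $K$ is supported at primes of codimension at least two in $S$, so $\dim K\le 1$ and therefore $H^i_\frakm(K)=0$ for all $i\ge 2$. Second, by Remark~\ref{rem:pdim} the spline module $C^r(\Delta)$ has projective dimension at most one over $S$. Since $S$ is a polynomial ring in three variables, the Auslander--Buchsbaum formula gives $\mathrm{depth}(C^r(\Delta))\ge 2$, and consequently $H^0_\frakm(C^r(\Delta))=H^1_\frakm(C^r(\Delta))=0$.

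Feeding these vanishings into the long exact sequence in local cohomology produces two facts: $H^2_\frakm(C^r(\Delta))$ is the cokernel of the connecting map $H^1_\frakm(K)\to H^2_\frakm(LS^r(\Delta))$ (because $H^2_\frakm(K)=0$), and $H^3_\frakm(C^r(\Delta))\cong H^3_\frakm(LS^r(\Delta))$ (because $H^2_\frakm(K)=H^3_\frakm(K)=0$). In particular $\mathrm{end}(H^2_\frakm(C^r(\Delta)))\le\mathrm{end}(H^2_\frakm(LS^r(\Delta)))$ and $\mathrm{end}(H^3_\frakm(C^r(\Delta)))=\mathrm{end}(H^3_\frakm(LS^r(\Delta)))$. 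Since the lower local cohomologies of $C^r(\Delta)$ vanish, its regularity is achieved at $i=2$ or $i=3$, and the preceding inequalities give
\[
 \reg(C^r(\Delta))\ \le\ \max\{\,2+\mathrm{end}(H^2_\frakm(LS^r(\Delta))),\,3+\mathrm{end}(H^3_\frakm(LS^r(\Delta)))\,\}\ \le\ \reg(LS^r(\Delta))\,,
\]
as desired.

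The argument is essentially bookkeeping once Lemma~\ref{L:locsup} and Remark~\ref{rem:pdim} are in hand, so I do not expect a serious obstacle. The main subtlety worth flagging is simply ensuring the hypotheses of those two results apply here; in particular Lemma~\ref{L:locsup} relies on Condition~(4) of Definition~\ref{D:generic} to identify the codimension-one localizations of $C^r(\Delta)$ with submodules of $LS^r(\Delta)$, which is precisely the genericity assumption that Theorem~\ref{T:FirstReduction} requires.
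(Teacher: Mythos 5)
Your proof is correct and follows essentially the same route as the paper: the paper deduces the bound from Lemma~\ref{L:locsup} together with the projective dimension bound of Remark~\ref{rem:pdim}, delegating the local cohomology bookkeeping to a cited proposition of~\cite{DiP_Mixed}, which is exactly the long exact sequence argument you write out explicitly. The only difference is that you unpack that citation, correctly using $\dim K\le 1$ to kill $H^i_\frakm(K)$ for $i\ge 2$ and $\mathrm{pdim}\,C^r(\Delta)\le 1$ to kill $H^0_\frakm$ and $H^1_\frakm$ of $C^r(\Delta)$.
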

\begin{proof}
  See~\cite[Thm.~4.7]{DiP_Mixed}.
  This is deduced from Lemma~\ref{L:locsup} using~\cite[Prop.~A.7]{DiP_Mixed} (we also need the projective dimension of $C^r(\Delta)$ to be bounded above by one---see Remark~\ref{rem:pdim}).
\end{proof}

\subsection{Second reduction}

We explain how the following result, which is~\cite[Thm.~5.5]{DiP_Mixed}, can be derived in our context of a mesh $\Delta$
which satisfies Condition (4) of Definition~\ref{D:generic}.

\begin{theorem}\label{T:SecondReduction}
  The regularity of $LS^{r}(\Delta)$ is at most the maximum of the regularity of the modules
  $\lszero{r}{\sigma}{\Delta}$ for $\sigma$ a face of $\Delta$ and $\lsone{r}{\tau}{\Delta}$ for $\tau$ an interior edge of
  $\Delta$.
\end{theorem}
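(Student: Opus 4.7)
The plan is to adapt the inductive argument from \cite[Thm.~5.5]{DiP_Mixed} to the semialgebraic setting, where the only ingredient about the mesh we will need is Condition~(4) of Definition~\ref{D:generic}, namely that the edge forms along the boundary of each face are distinct.

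First, I would linearly order the interior edges $\tau_1,\dotsc,\tau_m$ of $\Delta$ and set $M_0 := 0$ and, for $1\le k\le m$,
\[
   M_k\ :=\ M_{k-1} \;+\; \lsone{r}{\tau_k}{\Delta}\,,
\]
so that $M_m = LS^r(\Delta)$. For each $k$ there is a short exact sequence of graded $S$-modules
\[
   0\ \longrightarrow\ I_k\ \longrightarrow\ M_{k-1}\oplus \lsone{r}{\tau_k}{\Delta}\ \longrightarrow\ M_k\ \longrightarrow\ 0\,,
\]
where $I_k := M_{k-1}\cap \lsone{r}{\tau_k}{\Delta}$, and the usual regularity bound for short exact sequences gives $\reg(M_k)\le\max\{\reg(M_{k-1}),\reg(\lsone{r}{\tau_k}{\Delta}),\reg(I_k){-}1\}$. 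Iterating reduces the problem to bounding each $\reg(I_k)$ by the maximum of the regularities of the face-supported modules $\lszero{r}{\sigma}{\Delta}$ and the star-supported modules $\lsone{r}{\tau}{\Delta}$.

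The heart of the argument, and the step I expect to be the main obstacle, is the claim that $I_k$ is the sum of the face-supported submodules
\[
   \sum_{\sigma}\ \lszero{r}{\sigma}{\Delta}\,,
\]
where $\sigma$ ranges over those faces of $\st(\tau_k)$ that also lie in $\st(\tau_i)$ for some $i<k$. The inclusion $\supseteq$ is immediate. For $\subseteq$, take $F\in I_k$, so $F$ is supported on the (at most) two faces of $\st(\tau_k)$ and can also be written as $F = \sum_{i<k}F_i$ with $F_i$ supported on $\st(\tau_i)$. Outside $\st(\tau_k)$ the sum $\sum_i F_i$ vanishes; by considering the two faces of $\st(\tau_k)$ separately and using Condition~(4)---which rules out coincidences between edge forms of the same face and so prevents unexpected cancellations forced by shared curves---one decomposes $F$ face-by-face into contributions that are individually supported on a single face of $\st(\tau_k)$ that was already touched by an earlier star. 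This is exactly the content of the rectilinear argument in \cite[\S 5]{DiP_Mixed}, and it goes through verbatim once the $G_\tau$ are allowed to have degree greater than one.

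Granting the description of $I_k$, each $\reg(I_k)$ is bounded by the maximum of $\reg(\lszero{r}{\sigma}{\Delta})$ over the relevant faces (using again that regularity of a sum of submodules is controlled by the summands and their intersections, which are themselves face-supported and handled inductively at an even lower level of the filtration). Combining this with the regularity inequality above and collecting terms yields
\[
   \reg(LS^r(\Delta))\ \le\ \max\Bigl\{\max_{\sigma\in\Delta_2}\reg(\lszero{r}{\sigma}{\Delta}),\ \max_{\tau\in\Delta_1^\circ}\reg(\lsone{r}{\tau}{\Delta})\Bigr\}\,,
\]
which is the desired statement. The invocation of Remark~\ref{rem:pdim} ensures that projective dimension at most one, required for the regularity short exact sequence bounds used in \cite{DiP_Mixed}, still holds for our semialgebraic spline modules.
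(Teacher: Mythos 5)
Your route is genuinely different from the paper's. The paper does not filter $LS^r(\Delta)$; it imports from \cite[Sect.~5]{DiP_Mixed} an exact (\v{C}ech-type) complex $0\to C_n\to\dotsb\to C_1\to LS^r(\Delta)\to 0$ with $C_1=\bigoplus_\tau \lsone{r}{\tau}{\Delta}$ and the higher $C_k$ direct sums of face modules $\lszero{r}{\sigma}{\Delta}$, notes that the exactness proof is purely a matter of supports and so carries over to semialgebraic meshes, and then applies the regularity bound for exact complexes (\cite[Lem.~A.4]{DiP_Mixed}). Your filtration $M_0\subset M_1\subset\dotsb\subset M_m=LS^r(\Delta)$ with the short exact sequences $0\to I_k\to M_{k-1}\oplus\lsone{r}{\tau_k}{\Delta}\to M_k\to 0$ is a legitimate and arguably more elementary reorganization of the same support-theoretic content: it only needs the three-term regularity inequality rather than the lemma on regularity of long exact complexes.

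The one place you owe more is the identification of $I_k=M_{k-1}\cap\lsone{r}{\tau_k}{\Delta}$ with a sum of face modules: this is \emph{not} literally a statement proved in \cite[Sect.~5]{DiP_Mixed} (which establishes exactness of the complex above, not an intersection identity for a filtration), so ``goes through verbatim'' is not an adequate justification. Fortunately the claim admits a short direct proof. Write $F=F_\sigma e_\sigma+F_{\sigma'}e_{\sigma'}$ with $\sigma,\sigma'$ the faces of $\st(\tau_k)$, and $F=\sum_{i<k}F_i$ with $F_i\in\lsone{r}{\tau_i}{\Delta}$. If no earlier $\tau_i$ joins $\sigma$ to $\sigma'$, then each $F_i$ contributing to $\sigma$ vanishes on $\sigma'$, so the smoothness condition across $\tau_k$ applied to $F_i$ gives $G_{\tau_k}^{r+1}\mid (F_i)_\sigma$; summing, $G_{\tau_k}^{r+1}\mid F_\sigma$, and the divisibility of $F_\sigma$ by $G_{\tau'}^{r+1}$ for the remaining interior edges $\tau'$ of $\sigma$ follows because $F$ itself vanishes on the neighbors of $\sigma$ other than $\sigma'$. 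Hence $F_\sigma e_\sigma$ (and likewise $F_{\sigma'}e_{\sigma'}$) is a spline supported on a single face, which is what you need. Do note the degenerate case in which some earlier $\tau_j$ has $\st(\tau_j)=\st(\tau_k)$ (two faces sharing more than one interior edge): there $I_k=\lsone{r}{\tau_k}{\Delta}$ is a full star module and your description of $I_k$ fails, although the final inequality survives since $\reg(I_k)-1<\reg(\lsone{r}{\tau_k}{\Delta})$. With that case handled (or excluded), and with the observation that a sum of face modules over distinct faces is direct (disjoint supports), so its regularity is just the maximum of the summands' regularities, your argument is complete and delivers the same bound as the paper's.
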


The proof of Theorem~5.5 from~\cite{DiP_Mixed} carries over verbatim, as it is topological
rather than algebraic.
In~\cite[Sect.~5]{DiP_Mixed} the first author constructs a chain complex of the form 
\[
  0\ \rightarrow\ C_n\ \rightarrow C_{n-1}\ \rightarrow\ \dotsb\ \rightarrow\
  C_1\ \rightarrow\ LS^{r}(\Delta)\ \rightarrow\ 0\,,
\]
where $C_1$ is the direct sum of $\lsone{r}{\tau}{\Delta}$ for $\tau$ an interior edge and the remaining $C_k$
are direct sums of $\lszero{r}{\sigma}{\Delta}$ for certain faces $\sigma$.
This chain complex is shown to be exact using arguments that depend only on considerations of support, so exactness holds
in our context.
Given exactness, the bound of Theorem~\ref{T:SecondReduction} follows from the behavior of regularity in chain complexes
(see~\cite[Lem.~A.4]{DiP_Mixed}). 

\subsection{Third reduction}
We follow the arguments of~\cite[Sect.~6]{DiP_Mixed} to establish an upper bound on $\reg(\lsone{r}{\tau}{\Delta})$.
Notice that we may write $F\in C^r(\Delta)$ as $F=\sum_{\sigma} F_\sigma e_\sigma$, where
$F_\sigma=F|_{\sigma}$ is the restriction of $F$ to the face $\sigma$ and $e_\sigma$ is a formal basis
symbol.

For an interior edge $\tau$, let $\sigma,\sigma'$ be its incident faces with corresponding basis symbols $e,e'$.
Consider the three splines in $\lsone{r}{\tau}{\Delta}$,
\[
   F_1\ :=\ \prod_{\tau'\in\sigma} G_{\tau'}^{r+1}\cdot e\, ,\  
  F_2\ :=\ \prod_{\tau'\in\sigma'} G_{\tau'}^{r+1}\cdot e'\, , \ \mbox{ and }\ 
  F_\tau\ := \ \prod_{\tau'\in\st(\tau),\tau'\neq\tau}\!\! G_{\tau'}^{r+1}\cdot (e+e')\, .
\]
(In each product $\tau'$ runs over interior edges.)
As in~\cite[Sect.~6]{DiP_Mixed}, the quotient of $\lsone{r}{\tau}{\Delta}$ by the submodule $N_\tau$ generated by
$F_1,F_2,$ and $F_\tau$ has codimension $\ge 2$, so $\reg(\lsone{r}{\tau}{\Delta})$ is bounded above by the regularity of
$N_\tau$ (see~\cite[Prop.~A.7]{DiP_Mixed}; we also need that the projective dimension of $\lsone{r}{\tau}{\Delta}$ is at
most one---see Remark~\ref{rem:pdim}).  

We establish the regularity of $N_\tau$.  The splines $F_1,F_2,$ and $F_\tau$ have a single syzygy 
\[
  \Bigl(\prod\limits_{\tau'\in\sigma',\tau'\neq\tau} G_{\tau'}\Bigr) F_1\ +\
  \Bigl(\prod\limits_{\tau'\in\sigma,\tau'\neq\tau} G_{\tau'}\Bigr)F_2\ -\ G_\tau F_\tau\ =\ 0\,,
\]
which has degree $D_\tau=\sum_{\tau'\in\mbox{\footnotesize{st}}(\tau)\cap\Delta^\circ_1} \deg(G_{\tau'})(r+1)$.
Thus the regularity of the submodule $N_\tau$ of $\lsone{r}{\tau}{\Delta}$ generated by these three splines is $D_\tau{-}1$.
Theorem~\ref{T:GenericDimensionFormulaWithBound} follows from Theorems~\ref{T:FirstReduction} and~\ref{T:SecondReduction}.

\section{Concluding Remarks}\label{S:ConcludingRemarks}

There are many meshes for which neither Theorem~\ref{T:NSubdivisionStructure} nor
Theorem~\ref{T:GenericDimensionFormula} apply but the Hilbert polynomial can still be computed using homological algebra.
For these it is useful to have a presentation for the homology module $H_1(\calS/\calJ)$  derived by Schenck and Stillman
in~\cite[Lem.~3.8]{ShSt97b}.
This presentation holds for semialgebraic meshes after replacing the linear forms $l_\tau$ with the edge forms
$G_\tau$.
This presents $H_0(\calJ)$ as a quotient of the free module $S^{\Delta_1^\circ}$ (with the grading of each summand
shifted to reflect the degree of the edge form $G_\tau^{r+1}$) on the interior edges by the submodule generated by syzygies
of the ideals $J(v)$ (that is, the relations among collections of forms $G_\tau^{r+1}$ with $v\in\tau$) along with
generators of $S^{\Delta_1^\circ}$ that correspond to edges meeting the boundary of $\Delta$.
To illustrate this, we verify the Hilbert polynomials in Table~\ref{T:HilbertFunction} for $C^r(\Delta)$ where
$\Delta$ is the mesh of Figure~\ref{F:cell_complex}. 

\begin{example}\label{E:FirstMesh}
  Let $\Delta$ be the mesh of Figure~\ref{F:cell_complex}.
  Let $f$ and $g$ be the forms defining the two parabolas.
  The Schenck-Stillman presentation for $H_0(\calJ)$ shows that it is a cyclic $S$-module whose generator corresponds to
  the interior edge of $\Delta$ between $v=(-1,0)$ and $v'=(1,0)$.
  We have 
\[
  J(v)\ =\ J(v')\ =\ \langle f^{r+1},g^{r+1},y^{r+1}\rangle\,.
\]
 The presentation shows that $H_0(\calJ)$ is the quotient of $S(-r{-}1)$ by the ideal generated by coefficients $H$ of
 $y^{r+1}$ in all possible relations of the form 
\[
   Ff^{r+1}\ +\ Gg^{r+1}\ +\ Hy^{r+1}\ =\ 0\,.
\]
This is the \demph{colon} ideal, \defcolor{$\langle f^{r+1},g^{r+1}\rangle: y^{r+1}$}.
(For ideals $I$ and $J$, the colon ideal $I:J$ is the ideal of all ring elements multiplying $J$ into $I$.)
Hence
\[
   H_1(\calS/\calJ)\ \cong\ H_0(\calJ)\ \cong\  \langle f^{r+1},g^{r+1}\rangle:y^{r+1}\,.
\]
 This can be analyzed using the graded short exact sequence
 \begin{equation}\label{E:MultiplicationSequence}
  0\ \rightarrow\ \frac{S(-r-1)}{\langle f^{r+1},g^{r+1}\rangle:y^{r+1}}\ \xrightarrow{\ \cdot y^{r+1}\ }\
  \frac{S}{\langle f^{r+1},g^{r+1}\rangle}\ \rightarrow\
   \frac{S}{\langle f^{r+1},g^{r+1},y^{r+1}\rangle}\ \rightarrow\ 0\,.
 \end{equation}
  The last quotient is $S/J(v)=S/J(v')$.  From~\eqref{E:MultiplicationSequence} we obtain
 \begin{equation}\label{E:HomologyHilbert}
   \HP(H_0(\calJ),d)\ =\ (2r+2)^2-\HP(S/J(v),d)\,.
 \end{equation}
Since the Hilbert polynomials of $H_0(\calJ)$ and $S/J(v)$ are both eventually constant~\eqref{E:HomologyHilbert} relates
the multiplicities of $H_0(\calJ)$ and $S/J(v)$. 

By~\eqref{E:HomologyHilbert} it suffices to determine the multiplicity of $S/J(v)$.
Notice that
\[
  \sqrt{J(v)}\ =\ \sqrt{\langle f,g,y \rangle}\ =\ \sqrt{\langle y,x^2-z^2 \rangle}\ =\
  \langle x-z,y\rangle\cap \langle x+z,y\rangle
\]
 is supported at the two interior vertices $v$ and $v'$.
 So $J(v)$ fails Condition (2) of Definition~\ref{D:generic} and in particular we cannot use~\cite[Thm.~4.1]{DiPSS} to
 evaluate the multiplicity of $S/J(v)$.
 However, following~\cite[Rem.~6.3]{DiPSS}, we can use~\cite[Thm.~4.1]{DiPSS} {\sl locally} at each of the points $v$
 and $v'$ since the tangents of the curves defined by $f,g,$ and $y$ are distinct at both points.
 Thus the multiplicity of $S/J(v)$ is the sum of the multiplicities of $S/J(v)$ at $v$ and at $v'$.
 These multiplicities are the same as the multiplicity of $S/\langle y^{r+1},(y{-}z)^{r+1},(y{+}z)^{r+1}\rangle$.
  By~\cite[Cor.~3.4]{DiPSS}, this ideal has multiplicity $\binom{a+r+2}{2}-3\binom{a+1}{2}$, where
  $a=\lfloor(r+1)/2\rfloor$.
  Hence the multiplicity of $S/J(v)$ is twice this multiplicity.
  Using Corollary~\ref{C:DimFormula} and~\eqref{E:HomologyHilbert}, we get
 \begin{multline*}
  \quad  \HP(C^r(\Delta),d)\ =\ \binom{d+2}{2}+\binom{d+1-r}{2}+8\binom{d-2r}{2}\\
   +(2r+2)^2+2\left(\binom{a+r+2}{2}-3\binom{a+1}{2}\right)\,. \quad
 \end{multline*}
Considering $r$ even and odd, this becomes
\[
    \HP(C^r(\Delta),d)\ =\ 
   \left\lbrace
    \begin{array}{ll}
      4d^2-(34k-12)d+(88k^2-51k+8) & \mbox{if } r=2k-1\\\vspace{2pt}
     4d^2-(34k+5)d+(88k^2+37k+5) & \mbox{if } r=2k
    \end{array}
     \right.\ .
\]
This verifies the Hilbert polynomials in Table~\ref{T:HilbertFunction}.\hfill$\diamond$

\end{example}

\begin{remark}
   The underlined postulation numbers in Table~\ref{T:HilbertFunction} behave roughly as $4r+1$, which is what is
   expected if the regularity of $H_0(\calJ)$ is $4r+2$.
   From the short exact sequence~\eqref{E:MultiplicationSequence} we indeed expect $\reg(H_0(\calJ))\le 4r+2$, since $4r+2$
   is the regularity of the complete intersection $S/\langle f^{r+1},g^{r+1}\rangle$.  \hfill$\diamond$
\end{remark}

\begin{remark}
This presentation for $H_1(\calS/\calJ)$ can also be used to analyze semialgebraic splines on meshes which we shall
call quasi cross-cut meshes.
These are meshes in which every interior edge is part of an arc of a single algebraic curve that meets the boundary of the
mesh (this arc may pass through several interior vertices).
As Schenck and Stillman observed~\cite{ShSt97a,ShSt97b} for rectilinear meshes, the presentation for
$H_1(\calS/\calJ)$ shows that $H_1(\calS/\calJ)=0$.
Hence such splines satisfy dimension formulas extending those of Chui and Wang~\cite{ChuiWang83} and
Schumaker~\cite{Schum84} for rectilinear quasi cross-cut meshes.
The caveat is that the quotients $S/J(v)$ at vertices may still be quite difficult to analyze. \hfill$\diamond$
\end{remark}

\begin{remark}
   As in Example~\ref{E:FirstMesh}, it is possible to analyze meshes for which $H_0(\calJ)$ does not have finite length.
   In this case there is a contribution to $\HP(C^r(\Delta),d)$ coming from this homology module.
   If we relax Condition (3) of Definition~\ref{D:generic} to allow $H_0(\calJ)$ to be supported at points and analyze the
   contributions to $\HP(C^r(\Delta),d)$ coming from $H_0(\calJ)$ one should get a formula generalizing that of
   McDonald and Schenck in~\cite{McSh09}. \hfill$\diamond$
\end{remark}

\begin{remark}
  Condition (1) of Definition~\ref{D:generic} is required for the approach from~\cite{DiPSS}.
  By~\cite[Ex.~6.1]{DiPSS} this fails for large $r$ if two edge forms are tangent.
  In practice (see~\cite{Davydov16,DS17,DY17}) one may wish to impose vanishing conditions across a $C^1$ piecewise
  polynomial boundary, which requires such tangency at boundary vertices.
  Even in this situation it is likely possible to work out dimension formulas for small values of $r$ using our
  approach. \hfill$\diamond$ 
\end{remark}

\providecommand{\bysame}{\leavevmode\hbox to3em{\hrulefill}\thinspace}
\providecommand{\MR}{\relax\ifhmode\unskip\space\fi MR }
\providecommand{\MRhref}[2]{%
  \href{http://www.ams.org/mathscinet-getitem?mr=#1}{#2}
}
\providecommand{\href}[2]{#2}

\end{document}